\newcommand{\arxiv}[1]{\href{http://arxiv.org/abs/#1}{\tt arXiv:\nolinkurl{#1}}}
\newcommand{\arXiv}[1]{\href{http://arxiv.org/abs/#1}{\tt arXiv:\nolinkurl{#1}}}
\newcommand{\googlebooks}[1]{(preview at \href{http://books.google.com/books?id=#1}{google books})}
\definecolor{dark-red}{rgb}{0.7,0.25,0.25}
\definecolor{dark-blue}{rgb}{0.15,0.15,0.55}
\definecolor{medium-blue}{rgb}{0,0,.8}
\definecolor{DarkGreen}{RGB}{0,150,0}
\definecolor{rho}{named}{red}
\theoremstyle{plain}
\newtheorem{thm}{Theorem}[section]
\newtheorem*{thm*}{Theorem}
\newtheorem{cor}[thm]{Corollary}
\newtheorem*{cor*}{Corollary}
\newtheorem*{conj*}{Conjecture}
\newtheorem{lem}[thm]{Lemma}
\newtheorem{prop}[thm]{Proposition}
\newtheorem{quest}[thm]{Question}
\newtheorem*{quest*}{Question}
\newtheorem*{claim*}{Claim}
\theoremstyle{definition}
\newtheorem{defn}[thm]{Definition}
\newtheorem{sub-ex}[thm]{Sub-Example}
\newtheorem*{rem*}{Remark}
\newcommand{\comment}[1]{}
\newcommand{\be}{\begin{enumerate}[label=(\arabic*)]}
\newcommand{\ee}{\end{enumerate}}
\newcommand{\Z}{\mathbb{Z}}
\newcommand{\Q}{\mathbb{Q}}
\newcommand{\C}{\mathbb{C}}
\def\semicolon{,}
\def\applytolist#1{
 \expandafter\def\csname multi#1\endcsname##1{
 \def\multiack{##1}\ifx\multiack\semicolon
 \def\next{\relax}
 \else
 \csname #1\endcsname{##1}
 \def\next{\csname multi#1\endcsname}
 \fi
 \next}
 \csname multi#1\endcsname}
\def\calc#1{\expandafter\def\csname c#1\endcsname{{\mathcal #1}}}
\def\bbc#1{\expandafter\def\csname bb#1\endcsname{{\mathbb #1}}}
\def\bfc#1{\expandafter\def\csname bf#1\endcsname{{\mathbf #1}}}
\def\sfc#1{\expandafter\def\csname s#1\endcsname{{\sf #1}}}
\def\fc#1{\expandafter\def\csname f#1\endcsname{{\mathfrak #1}}}
\newcommand{\noshow}[1]{}
\def\sdprod{{\times\!\vrule height5pt depth0pt width0.4pt\,}}
\begin{document}

\title{Vanishing of categorical obstructions for permutation orbifolds}
\author{Terry Gannon and Corey Jones}
\date{}
\maketitle

\begin{abstract}
The orbifold construction $A\mapsto A^G$ for a finite group $G$ is fundamental in rational conformal field theory. The construction of $Rep(A^G)$ from $Rep(A)$ on the categorical level, often called gauging, is also prominent in the study of topological phases of matter. Given a non-degenerate braided fusion category $\mathcal{C}$ with a $G$-action, the key step in this construction is to find a braided $G$-crossed extension compatible with the action. The extension theory of Etingof-Nikshych-Ostrik gives two obstructions for this problem, $o_3\in H^3(G)$ and $o_4\in H^4(G)$ for certain coefficients, the latter  depending on a categorical lifting of the action and is notoriously difficult to compute. We show that in the case where $G\le S_n$ acts by permutations on $\mathcal{C}^{\boxtimes n}$, both of these obstructions vanish. This verifies a conjecture of M\"uger, and constitutes a nontrivial test of the conjecture that all modular tensor categories come from vertex operator algebras or conformal nets.

\end{abstract}

\section{Introduction.}

Modular tensor categories play an important role in low dimensional physics, both in 2-dimensional conformal field theories and topological phases of matter. In conformal field theory, modular tensor categories arise as local representations of fields of observables. An important construction in this context is the \textit{orbifold construction}, which takes the fixed-point fields of observables by a global action of a finite group. In the completely-rational conformal net axiomatization, M{\"u}ger has shown that the resulting modular tensor category can be constructed from the original by a categorical procedure known as \textit{gauging} \cite{MR2183964}. Gauging of modular categories also appears in the context of topological phases of matter, describing the process of promoting a group of global symmetries to local symmetries \cite{1410.4540}. 

In the setting of abstract modular tensor categories, gauging is a multi-step process \cite{MR3555361}. The starting point is an action of $G$ on $\cC$ compatible with the braiding, i.e. a homomorphism $\rho:G\rightarrow EqBr(\cC)$, where $EqBr(\cC)$ is the group of equivalence classes of braided autoequivalences of $\cC$. To gauge $\cC$ by $G$, we first need to find a compatible \textit{braided G-crossed extension} of $\cC$. We then equivariantize by the associated action of $G$, which produces a new modular tensor category called a \textit{gauging of $\cC$ by $G$}. Gaugings may not exist at all, and if they do they are usually not unique. 

The problem is that associated to the initial homomorphism $\rho:G\rightarrow EqBr(\cC)$ are two cohomological obstructions to finding a compatible braided $G$-crossed extension \cite{MR2677836}. First one asks if $\rho$ can be lifted to a categorical action $\underline{\rho}: \underline{G}\rightarrow \underline{EqBr}(\cC)$ (we define $\underline{G},\underline{EqBr}(\cC)$ etc in Section 2 below). There is an obstruction to obtaining a lifting, given by a cohomology class $o_{3}(\rho)\in H^{3}(G, \operatorname{Inv}(\cC))$, where $\operatorname{Inv}(\cC)$ is the group of invertible objects in $\cC$, also known as the \textit{simple currents} of $\cC$. If $o_3(\rho)$ vanishes, the liftings form a torsor over $H^{2}(G, \operatorname{Inv}(\cC))$. Once we have a categorical action $\underline{\rho}:\underline{G}\rightarrow \underline{EqBr}(\cC)\cong \underline{Pic}(\cC)$, in order to obtain a braided $G$-crossed extension, we now need to lift this to a tri-functor $\underline{\underline{G}}\rightarrow\underline{\underline{Pic}}(\cC)$ \cite{MR2677836}, Theorem 7.12. The obstruction for this lifting is a cohomology class $o_{4}(\underline{\rho})\in H^{4}(G, \C^{\times})$. If this vanishes, the liftings form a torsor over $H^{3}(G,\C^{\times})$. Thus starting with $\rho$, there are two obstructions to the existence of a desired braided $G$-crossed extension of $\cC$, $o_3(\rho)$ and $o_4(\underline{\rho})$, where the latter depends on a choice of lifting of $\rho$.

 This obstruction theory has direct relevance to any (chiral) rational conformal field theory $A$, which we can axiomatize either as a strongly-rational vertex operator algebra or a completely-rational conformal net of von Neumann algebras. The following statements are theorems for conformal nets, though many are still only conjectures for vertex operator algebras. The category of representations $Rep(A)$ is a modular tensor category. Suppose a finite group $G$ acts faithfully on $A$. Then $G$ acts as automorphisms on $Rep(A)$ which respect the braiding. The orbifold $A^G$ (the subtheory of $A$ consisting of $G$-fixed-points) will also be rational, so $Rep(A^G)$ will also be a modular tensor category. This category contains $Rep\,G$ as a fusion subcategory --- this fact goes back to Doplicher-Haag-Roberts, but also follows because extending $A^G$ by the objects in that subcategory recovers $A$. The $G$-twisted (solotonic) representations of $A$ form the braided $G$-crossed category $Rep(A^G)\sdprod({Rep}\,G)$, called the de-equivariantization of $Rep(A^{G})$ \cite{MR2183964}. The equivariantization $({Rep}(A^G)\sdprod({Rep}\,G))^G$ recovers $Rep(A^G)$. In particular, the obstruction $o_3(\rho)$ must vanish, and for some resulting choice of $\underline{\rho}$ so must the obstruction $o_4(\underline{\rho})$. This means that if we can find some $G$-action $\rho$ on a modular tensor category $\cC$, for which either the obstruction $o_3(\rho)$ or all $o_4(\underline{\rho})$ fail to vanish, then no completely-rational conformal net can have a $G$-action which acts like $\rho$ (respectively $\underline{\rho}$) on its representations.

An obvious class of orbifolds are the \textit{permutation orbifolds}. Start with a rational conformal field theory $A$, say a conformal net. Write $\cC=Rep(A)$. Then any subgroup $G\le S_{n}$ acts by global automorphisms on the tensor power theory $A^{\otimes n}$; the subtheory $(A^{\otimes n})^G$ is called the permutation orbifold. Then $\rho_n$ acts on the objects and morphisms of the Deligne product $\cC^{\boxtimes n}$ in the obvious way by permuting factors, as given explicitly in Section \ref{PermutationActions}. As shown in Proposition \ref{o3vanish} below, this always lifts to the \textit{standard permutation categorical action} $\underline{\rho}_{n}$ of $G$ by permutation functors, which is a strict action whose tensorators are all identities. Hence the obstruction $o_3(\rho_n)$ must vanish. Even in this simple setting, however, the obstruction $o_4(\underline{\rho_n})$ is difficult to compute explicitly. If $o_4$ did not vanish for the standard permutation categorical action $\underline{\rho_n}$, then there could be no conformal net $A$ realizing $\cC$. But it has been conjectured that any modular tensor category is $Rep(A)$ for some rational conformal field theory $A$. For this reason, M{\"u}ger conjectured that $o_4(\underline{\rho_n})$ always vanishes (Conjecture 5.5, Appendix 5, \cite{MR2674592}). This same reason makes the question of whether all $o_4(\underline{\rho_n})$ vanish, both natural and compelling. The main result of our paper answers M{\"u}ger's conjecture affirmatively. 

\begin{thm}\label{MainThm} Let $G\le S_{n}$ be a finite group and $\cC$ a non-degenerate braided fusion category. Let $\underline{\rho}_{n}: \underline{G}\rightarrow \underline{EqBr}(\cC^{\boxtimes n})\cong \underline{Pic}(\cC^{\boxtimes n})$ be the standard permutation categorical action. Then $o_{4}(\underline{\rho})$ vanishes.
\end{thm}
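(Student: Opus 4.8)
The plan is to prove the equivalent assertion that $\cC^{\boxtimes n}$ admits a braided $G$-crossed extension realizing precisely the standard permutation categorical action $\underline{\rho}_n$; by the Etingof--Nikshych--Ostrik theory (\cite{MR2677836}, Theorem~7.12) the existence of such an extension is equivalent to $o_4(\underline{\rho}_n)=0$. Since the standard permutation action of $G\le S_n$ is the restriction of that of $S_n$, and restricting a braided $S_n$-crossed extension along $G\hookrightarrow S_n$ yields a braided $G$-crossed extension inducing the restricted categorical action, it suffices to treat $G=S_n$.

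First I would write down the candidate extension $\cD=\bigoplus_{\sigma\in S_n}\cD_\sigma$. For $\sigma\in S_n$ with $k$ cycles, set $\cD_\sigma:=\cC^{\boxtimes k}$ as a plain category, with one tensor factor of $\cC$ attached to each cycle of $\sigma$; thus $\cD_e=\cC^{\boxtimes n}$. Equip each $\cD_\sigma$ with the ``$\sigma$-diagonal'' invertible $\cC^{\boxtimes n}$-module structure, in which the factors of an object of $\cC^{\boxtimes n}$ act on the factor of $\cD_\sigma$ indexed by the cycle through them, threaded around that cycle by the braiding of $\cC$; this is the image of the permutation autoequivalence under $\underline{EqBr}(\cC^{\boxtimes n})\simeq\underline{Pic}(\cC^{\boxtimes n})$. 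The essential datum is the tensor product $\otimes\colon\cD_\sigma\boxtimes\cD_\tau\to\cD_{\sigma\tau}$: a cycle of $\sigma\tau$ is traced by alternately applying $\tau$ then $\sigma$, hence decomposes into arcs lying in cycles of $\tau$ and of $\sigma$, and the factor of $\cD_{\sigma\tau}$ indexed by that cycle is the ordered tensor product in $\cC$ of the factors of $\cD_\sigma$ and $\cD_\tau$ attached to those arcs, with the cyclic order reconciled using the braiding. The $G$-action permutes the factors in the evident way, and the $G$-crossed braiding $\cD_\sigma\boxtimes\cD_\tau\to\cD_{\sigma\tau\sigma^{-1}}\boxtimes\cD_\sigma$ is built analogously from the braiding of $\cC$ transported along permutations.

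Next I would verify the coherence axioms: the module pentagon for each $\cD_\sigma$, associativity of $\otimes$, compatibility of the $G$-action, and the hexagon-type axioms for the crossed braiding, all of which should reduce to the pentagon and hexagon identities of $\cC$ together with purely combinatorial facts about how the cycles of $\sigma\tau$ and $\tau\kappa$ sit inside $\sigma\tau\kappa$. Finally one checks that the trivial component of $\cD$ is $\cC^{\boxtimes n}$ with $G$ acting by $\rho_n$, and that the categorical action $\underline{G}\to\underline{Pic}(\cC^{\boxtimes n})$ extracted from $\cD$ is the \emph{standard} lift $\underline{\rho}_n$, not one of its $H^2(G,\operatorname{Inv}(\cC^{\boxtimes n}))$-twists --- which holds because every structure isomorphism is built canonically from the (identity) tensorators of the permutation functors and the braiding of $\cC$, introducing no spurious scalars. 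Then Theorem~7.12 of \cite{MR2677836} forces $o_4(\underline{\rho}_n)=0$.

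The principal obstacle is the coherence verification. Reconciling cyclic orders via the braiding involves genuine choices, and one must show that the braid relations (Yang--Baxter equation) in $\cC$ exactly cancel the discrepancies arising when a triple $\sigma,\tau,\kappa$ is multiplied in different ways: concretely, a coherence theorem to the effect that the braiding of $\cC$ is compatible enough with symmetric-group combinatorics that no scalar anomaly survives. Turning this into a complete argument --- equivalently, writing an explicit $4$-cocycle representing $o_4(\underline{\rho}_n)$ via the ENO formula and exhibiting a trivializing $3$-cochain --- is where the real work lies. A secondary point requiring care is the bookkeeping in passing among the $\underline{EqBr}$, $\underline{Pic}$, and invertible-bimodule pictures, and confirming that the $\sigma$-diagonal module structures really are the images of the permutation autoequivalences under the equivalence $\underline{EqBr}(\cC^{\boxtimes n})\simeq\underline{Pic}(\cC^{\boxtimes n})$.
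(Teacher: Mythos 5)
Your approach is genuinely different from the paper's, but as written it is not a proof: it is a sketch of a construction whose crucial step you yourself flag as undone. The entire mathematical content of the statement $o_4(\underline{\rho}_n)=0$ is the assertion that the coherence data for the proposed extension can be chosen consistently, and you leave exactly that unverified (``this is where the real work lies''). Saying that the coherences ``should reduce to the pentagon and hexagon identities of $\cC$ together with purely combinatorial facts'' is a conjecture, not an argument; the danger is precisely that after using Yang--Baxter to cancel the obvious discrepancies, a residual $4$-cocycle in $\C^{\times}$ survives, and showing that it is a coboundary is equivalent to what you are trying to prove. Worse, your claim that the construction is ``canonical ... introducing no spurious scalars'' is in tension with the fact, discussed in the paper after Corollary~1.2, that even in the conformal-field-theory realization there is no distinguished extension (for $\cC=\Vec$ and $3\mid|G|$, three inequivalent extensions occur, depending on the central charge mod $24$), and with the observation that the closest known construction of this type (\cite{MR2806495}, from a $G$-set via modular functors) is only known to be \emph{weakly} rigid outside the $S_2$ case --- the authors explicitly say they cannot confirm it gives a braided $G$-crossed extension.

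The paper's actual argument sidesteps the construction entirely and is a restriction-detection argument in the group cohomology of symmetric groups. It shows that $o_4$ is detected along a chain of Young subgroups on which the action visibly splits as a product: $H^4(S_n,\C^\times)$ pulls back isomorphically from $H^4(S_8,\C^\times)$ for $n\ge 8$ (Nakaoka stability, Corollary~\ref{StableCor}), and $H^4(S_8,\C^\times)\to H^4(S_4\times S_4,\C^\times)$ and $H^4(S_4,\C^\times)\to H^4(S_2\times S_2,\C^\times)$ are injective (Lemmas~\ref{S8} and~\ref{S4}, proved by GAP or Steenrod-square computations). Combined with the elementary facts that $o_4$ is natural under restriction (Equation~\eqref{restriction}), that the $o_4$ of a product action vanishes when each factor's does (Lemma~\ref{product}), and that $H^4(S_2,\C^\times)=0$, the vanishing propagates from $S_2\times S_2$ up to $S_n$. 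This route never writes down the extension; it buys rigor at the cost of giving no explicit model. Your route, if completed, would give the explicit extension as well, but completing it is a substantially harder (and, to date, open) problem.
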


Note that we are able to work in the \emph{a priori} greater generality of non-degenerate braided fusion categories, since our arguments don't require modular data. In Question \ref{question} we ask whether $o_4$ vanishes for the 2-cocycle twistings of $\underline{\rho}_n$. We wish to point out two immediate corollaries of our main result. By \cite{MR2677836}, Theorem 7.12, we obtain the following statement which explicitly addresses M{\"u}ger's conjecture.

\begin{cor} Let $\cC$ be a non-degenerate braided fusion category and $G\le S^{n}$. Then there exist braided $G$-crossed fusion categories whose trivially-graded component is $\cC^{\boxtimes n}$, and whose categorical action restricted to the trivial component is the standard permutation braided categorical action of $G$ on $\cC^{\boxtimes n}$. The equivalence classes (up to braided equivalence) of such extensions form a torsor over $H^{3}(G, \C^{\times})$.
\end{cor}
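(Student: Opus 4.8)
The plan is to deduce this corollary formally from the gauging obstruction theory of Etingof--Nikshych--Ostrik \cite{MR2677836}, feeding in the two facts already in hand. First I would recall from Proposition \ref{o3vanish} that the permutation action $\rho_n$ of $G\le S_n$ on $\cC^{\boxtimes n}$ lifts to the standard permutation categorical action $\underline{\rho}_n\colon\underline{G}\to\underline{EqBr}(\cC^{\boxtimes n})$, so that $o_3(\rho_n)$ vanishes and such a categorical action genuinely exists. Since $\cC$ is non-degenerate, its Deligne power $\cC^{\boxtimes n}$ is again non-degenerate --- the M\"uger center of a Deligne product of braided fusion categories is the Deligne product of their M\"uger centers --- so $\underline{EqBr}(\cC^{\boxtimes n})\simeq\underline{Pic}(\cC^{\boxtimes n})$ and we may view $\underline{\rho}_n$ as a categorical action on the Picard $2$-group. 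By Theorem \ref{MainThm} the second obstruction $o_4(\underline{\rho}_n)\in H^4(G,\C^{\times})$ also vanishes.

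With these two vanishings, I would invoke \cite{MR2677836}, Theorem 7.12: for a non-degenerate braided fusion category $\cB$ with a categorical action $\underline{G}\to\underline{Pic}(\cB)$, the braided $G$-crossed extensions of $\cB$ whose trivially-graded component is $\cB$ equipped with the prescribed categorical action are classified, up to braided equivalence, by the liftings of that action to a monoidal $2$-functor $\underline{\underline{G}}\to\underline{\underline{Pic}}(\cB)$; the obstruction to the existence of such a lift is exactly $o_4$, and when it vanishes the set of liftings --- hence the set of equivalence classes of such extensions --- is a torsor over $H^3(G,\C^{\times})$. Applying this with $\cB=\cC^{\boxtimes n}$ and the categorical action $\underline{\rho}_n$ (for which $o_4=0$) delivers both the existence statement and the torsor structure over $H^3(G,\C^{\times})$.

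Since all the real work is already contained in Theorem \ref{MainThm}, I do not expect a genuine obstacle here; the only points needing care are bookkeeping. One should check that the datum ``braided $G$-crossed fusion category with trivially-graded component $\cC^{\boxtimes n}$ and trivial-component categorical action the standard permutation action'' is literally the input of \cite{MR2677836}, Theorem 7.12 applied to $\underline{\rho}_n$, so that ``such extensions'' on the two sides agree; and that non-degeneracy does pass to Deligne powers, so that the Picard $2$-group is the correct target and the identification $\underline{EqBr}\simeq\underline{Pic}$ is legitimate. Both are standard, so the corollary is a formal consequence of the main theorem together with the ENO extension theorem.
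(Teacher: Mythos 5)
Your proposal matches the paper's own (implicit) proof exactly: the paper derives the corollary by combining Proposition \ref{o3vanish} (existence of the standard permutation categorical action), Theorem \ref{MainThm} (vanishing of $o_4$), and Theorem 7.12 of \cite{MR2677836}, which is precisely the route you take. Your bookkeeping remarks about non-degeneracy of $\cC^{\boxtimes n}$ and the identification $\underline{EqBr}\simeq\underline{Pic}$ are correct and, if anything, spell out details the paper leaves implicit.
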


We point out M{\"u}ger actually conjectures that there is a \textit{distinguished} such category. Although we have a distinguished categorical action $\underline{\rho}_{n}: \underline{G}\rightarrow \underline{EqBr}(\cC^{\boxtimes n})\cong \underline{Pic}(\cC^{\boxtimes n})$, our result does not pick out a distinguished extension, and indeed we have several: the equivalence classes form a torsor as stated in the corollary, with no special class distinguished. M\"uger's Conjecture 6.3 discusses the permutation orbifolds of conformal field theory, but even in this context there is not a distinguished category: e.g. in the special case where $Rep(A)=\mathrm{Vec}_\bbC$ and 3 divides the order of $G$, exactly 3 different categories can arise (the resulting category depends on the central charge modulo 24 --- this has been observed by Marcel Bischoff, as well as Example 2.1.1 in \cite{1707.08388} and Theorem 2 in \cite{EGrecon}. Thus the ``distinguished'' part of M{\"u}ger's conjecture fails, though for conformal field theories the resulting categories certainly are severely limited. 

Given a finite $G$-set, \cite{MR2806495} construct a weak $G$-equivariant fusion category.  It is tempting to guess that this matches a braided $G$-crossed extension of Corollary 1.2  but we cannot say for sure: it is only weakly rigid, and not necessarily rigid, because it is constructed using modular functors, though it does give a fusion category in the $S_2$ case.

Bantay's \textit{Orbifold Covariance Principle} \cite{MR1782167} is the requirement that all properties of a rational conformal field theory have to be compatible with the fact that permutation orbifolds of a rational theory is rational. For example, you get fairly easily from this the congruence property of the SL$_2(\bbZ)$ representation (modular data) of these theories. Modulo one subtlety, Corollary 1.2 requires that a modular tensor category  must be compatible with the fact that its gaugings by permutation actions, which all must exist, are themselves all healthy modular tensor categories. This should have plenty of consequences, including a faster proof of the congruence property. The subtlety though is that \cite{MR2677836} ignores spherical  structures for extensions, and in this setting one expects the theory to be slightly different, though we expect the obstructions for permutation functors to be the same in any case.

If $\cC$ is a braided fusion category with M{\"u}ger center $Z_{2}(\cC)$, a \textit{minimal non-degenerate extension} is a non-degenerate braided fusion category $\cD$ containing $\cC$ as a full fusion subcategory such that $\text{FPdim}(\cD)=\text{FPdim}(\cC)\text{FPdim}(Z_{2}(\cC))$. These do not always exist (for example, see Proposition 4.11 of \cite{1712.07097}). However, the existence of such an extension for a category with $Z_{2}(\cC)\cong Rep\,G$ for some finite group $G$ is closely tied with the vanishing of the $o_{4}$ obstruction associated with the canonical categorical action of $G$ on the de-equivariantization of $\cC$. From the arguments of Appendix 5, Theorem 5.4 of \cite{MR2674592}, or more directly from Theorem 4.8 (i) of \cite{1712.07097}, we have the following immediate corollary of Theorem \ref{MainThm}.

\begin{cor} Let $\cC$ be a non-degenerate braided fusion category, $G\le S^{n}$, and $(\cC^{\boxtimes n})^{G}$ the equivariantization of $\cC^{\boxtimes n}$ by the associated categorical permutation action. Then there exists a minimal non-degenerate extension of $(\cC^{\boxtimes n})^{G}$.
\end{cor}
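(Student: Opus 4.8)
The plan is to deduce this from Theorem~\ref{MainThm} by carrying the gauging procedure one step further: once $o_4(\underline{\rho}_n)=0$ we may form a braided $G$-crossed extension of $\cC^{\boxtimes n}$, and its $G$-equivariantization will be the sought minimal non-degenerate extension. By Proposition~\ref{o3vanish} the obstruction $o_3(\rho_n)$ vanishes, so $\underline{\rho}_n$ is a genuine categorical lift of $\rho_n$, and by Theorem~\ref{MainThm} the obstruction $o_4(\underline{\rho}_n)$ vanishes. Hence the extension theory of \cite{MR2677836}, Theorem 7.12 --- invoked through the identification $\underline{EqBr}(\cC^{\boxtimes n})\cong\underline{Pic}(\cC^{\boxtimes n})$, valid because $\cC^{\boxtimes n}$ is non-degenerate --- yields a braided $G$-crossed fusion category $\cB=\bigoplus_{g\in G}\cB_g$ with trivial component $\cB_e=\cC^{\boxtimes n}$ whose induced categorical action on $\cB_e$ is $\underline{\rho}_n$ (this is exactly Corollary~1.2).

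Next I would set $\cM:=\cB^{G}$, the $G$-equivariantization. Since $\cB_e=\cC^{\boxtimes n}$ is non-degenerate, $\cB$ is a non-degenerate braided $G$-crossed fusion category, and therefore $\cM$ is a non-degenerate braided fusion category (equivariantization of a non-degenerate braided $G$-crossed category; cf.\ \cite{MR2183964}). Restriction to the trivial component gives a fully faithful braided embedding $(\cC^{\boxtimes n})^{G}=(\cB_e)^{G}\hookrightarrow\cM$, realizing $(\cC^{\boxtimes n})^{G}$ as a full fusion subcategory of $\cM$, and it remains only to compare Frobenius--Perron dimensions. As $\cB$ is a faithfully $G$-graded fusion category, $\text{FPdim}(\cB_g)=\text{FPdim}(\cB_e)$ for all $g$, so $\text{FPdim}(\cB)=|G|\,\text{FPdim}(\cC)^{n}$ and hence $\text{FPdim}(\cM)=|G|\,\text{FPdim}(\cB)=|G|^{2}\,\text{FPdim}(\cC)^{n}$. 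On the other hand $\text{FPdim}((\cC^{\boxtimes n})^{G})=|G|\,\text{FPdim}(\cC)^{n}$, while $Z_2((\cC^{\boxtimes n})^{G})=\Rep G$ has dimension $|G|$; so $\text{FPdim}(\cM)=\text{FPdim}((\cC^{\boxtimes n})^{G})\cdot\text{FPdim}(Z_2((\cC^{\boxtimes n})^{G}))$, which is exactly the condition for $\cM$ to be a minimal non-degenerate extension. Equivalently, one may simply quote Theorem 4.8(i) of \cite{1712.07097} (or Theorem 5.4 of Appendix 5 of \cite{MR2674592}), whose hypothesis is the vanishing of $o_4$ for the canonical $G$-action on the de-equivariantization of $(\cC^{\boxtimes n})^{G}$ --- and that de-equivariantization is $\cC^{\boxtimes n}$ equipped with $\underline{\rho}_n$.

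I expect the only delicate points to be bookkeeping, not substance. First one uses that $Z_2((\cC^{\boxtimes n})^{G})=\Rep G$: this is the standard fact that equivariantizing a non-degenerate braided fusion category produces a category whose M\"uger center is the canonically embedded copy of $\Rep G$, and it rests on the non-degeneracy of $\cC^{\boxtimes n}$. Second, if one follows the de-equivariantization route one must confirm that the categorical action it returns is the \emph{standard} permutation action $\underline{\rho}_n$ to which Theorem~\ref{MainThm} applies, rather than some $H^2(G,\operatorname{Inv}(\cC^{\boxtimes n}))$-twist of it --- for such twists the vanishing of $o_4$ is only raised here, as Question~\ref{question}. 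Strictness of the permutation functors, which forces all tensorators to be identities, is what pins this down. The remaining verifications --- the dimension count and the subcategory inclusion --- are routine.
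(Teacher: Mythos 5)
Your proposal is correct and follows essentially the same route as the paper: both rest on Theorem~\ref{MainThm} together with the equivalence, recorded in Theorem~4.8(i) of \cite{1712.07097} (or Theorem~5.4 of Appendix~5 of \cite{MR2674592}), between the vanishing of $o_4$ for the canonical action on the de-equivariantization and the existence of a minimal non-degenerate extension. The one genuine addition is that you unpack what the cited theorem is doing: you build the braided $G$-crossed extension $\cB$ directly from Corollary~1.2, equivariantize, and verify the Frobenius--Perron dimension identity $\text{FPdim}(\cB^G)=\text{FPdim}\bigl((\cC^{\boxtimes n})^G\bigr)\cdot\text{FPdim}(\Rep G)$ by hand; this makes the corollary self-contained, whereas the paper outsources that step to the references. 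Your worry about $H^2$-twists is not in fact an issue, though your stated resolution is slightly off: the corollary's hypothesis fixes $(\cC^{\boxtimes n})^G$ to be the equivariantization by $\underline{\rho}_n$, so de-equivariantizing by $\Rep G=Z_2\bigl((\cC^{\boxtimes n})^G\bigr)$ returns $\underline{\rho}_n$ on the nose by the equivariantization/de-equivariantization duality, not because the tensorators of $\underline{\rho}_n$ happen to be identities.
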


We hope these results clear the path to understanding the categories resulting from permutation gauging. Recent progress has been made in the case of $S_{2}$, \cite{FusionPermutation}. The results there show that basic descriptions in the general case promises to be very complicated, or at best, difficult to prove using known techniques. On the other hand, a great deal is known about permutation orbifolds of conformal field theories \cite{MR1606821}, \cite{MR1620424}, \cite{MR2116735}, \cite{EGrecon}. We believe that comparing permutation orbifolds in conformal field theory and permutation gauging of modular categories could provide new clues for answering the important question: does every modular category arise from conformal field theory?

The outline of the paper is as follows. In Section $2$, we review some basics of braided $G$-crossed extension theory, and discuss the behavior of the $o_{4}$ obstruction under tensor product splitting. In Section 3 we discuss some lemmas regarding the cohomological restriction maps for subgroups of symmetric groups. In Section 4, we define the standard permutation categorical actions, and investigate possible twistings. Finally, we apply the group cohomology lemmas to prove Theorem \ref{MainThm}. We include an appendix with GAP computations.

\medskip

\textbf{Acknowledgements}. The authors would like to thank Michael M{\"u}ger, Andrew Schopieray  and Zhenghan Wang for helpful comments. This research was conducted while the first author was visiting the Australian National University. The first author was supported by an NSERC Discovery grant. The second author was supported by Discovery Projects ‘Subfactors and symmetries’ DP140100732 and ‘Low dimensional categories’ DP160103479 from the Australian Research Council.

\section{Braided $G$-crossed extension theory.}

Let $\cC$ be a braided fusion category over $\C$. For definitions and basic properties of braided fusion categories, see the textbook \cite{MR3242743}. In this section we will recall the basics of braided $G$-crossed extension theory. The original theory can be found in \cite{MR2677836}, while an explanation of general extension theory following the same approach can be found in \cite{1711.00645}. An overview of the theory with a specific emphasis on gauging can be found in \cite{MR3555361}.

Given a $\cC$-module category $\cM$, we can equip it with the structure of a $\cC$-bimodule category using the braiding (following the conventions in \cite{MR3107567}, Section 2.8). We say that $\cM$ is \textit{invertible} if it is invertible as a $\cC$-bimodule category. We define the tri-category $\underline{\underline{Pic}}(\cC)$ as follows:

\begin{itemize}
\item
There is a single $0$ morphism
\item
$1$-morphisms are invertible $\cC$-module categories, and 1-composition is given by relative tensor product (as bimodules).
\item
$2$-morphisms are module equivalences, with $1$-composition given by balanced tensor product of functors, and 2-composition by ordinary composition of functors
\item
$3$-morphisms are module-functor natural isomorphisms, with the obvious compositions.
\end{itemize}

This tri-category is also a categorical 2-group, since there is only one $0$-morphism and all morphisms are invertible. We can truncate once to form the monoidal category (bicategory with one object) $\underline{Pic}(\cC)$, where the 2-morphisms are equivalence classes of module equivalences. Truncating again, we obtain the group $Pic(\cC)$. The reason we are interested in this tri-category is that it controls braided $G$-crossed extensions of $\cC$.

Recall a \textit{braided $G$-crossed extension} of a braided fusion category $\cC$ is a $G$-graded fusion category $\cD:=\bigoplus_{g\in G} \cC_{g}$ with $\cC_{e}=\cC$, equipped with a categorical action of $G$ on $\cD$, and isomorphisms $c_{X,Y}:X\otimes Y\rightarrow g(Y)\otimes X$ for any $Y\in \cD$ and $X\in \cC_{g}$ called the $G$-crossed braiding subject to a family of compatibilities and coherences, see Definition 4.41 \cite{MR2609644}. The $G$-crossed braiding must restrict to the original braiding on $\cC_{e}=\cC$. The coherences ensure that the categorical action restricts to a braided categorical action on $\cC$.

When $\cC$ is modular (or more generally, non-degenerate braided), then equivariantizing a braided $G$-crossed extension by the $G$-action produces a new modular (non-degenerate braided) fusion category. The process of starting with $\cC$ equipped with a $G$-action, finding a braided $G$-crossed extension whose $G$-action restricts on the trivial component to the one we started with, and then equivariantizing is called \textit{gauging} \cite{MR3555361}. While we are ultimately interested in the non-degenerate fusion category produced by gauging, we have to first construct and classify the braided $G$-crossed extensions.

To do so, let $\underline{\underline{G}}$ be the tri-category with one object, whose 1-morphisms are given by group elements, and all higher morphisms are identities. The 1-truncation (which is a monoidal category) is denoted $\underline{G}$. The punchline of \cite{MR2677836}, Theorem 7.12 is that braided $G$-crossed extensions of $\cC$ are classified by tri-functors $\underline{\underline{G}}\rightarrow \underline{\underline{Pic}}(\cC)$. Thus to understand gauging, we must first understand some higher categorical algebra.

But we are left with the daunting question: how does one build a tri-functor $\underline{\underline{G}}\rightarrow \underline{\underline{Pic}}(\cC)$? One could try starting with the much less scary notion of a monoidal functor $\underline{G}\rightarrow \underline{Pic}(\cC)$. This turns out to be a good idea, since the monoidal category $\underline{Pic}(\cC)$ is actually well-studied in a different guise. Let $\underline{EqBr}(\cC)$ denote the monoidal category whose objects are braided monoidal autoequivalences of $\cC$ and whose morphisms are monoidal isomorphisms. There is a canonical monoidal functor $\underline{F}: \underline{Pic}(\cC)\rightarrow \underline{EqBr}(\cC)$, described as follows.

For a module category $\cM$, we have two tensor functors $\alpha_{\pm}:\cC\rightarrow \operatorname{End}_{\cC}(\cM)$, called $\alpha$-inductions. In both cases, the underlying endofunctor of $\cM$ associated to $\alpha_{\pm}(X)$ is just the ordinary left module action by $X$. However to define an $\operatorname{End}_{\cC}(\cM)$ structure on this endofunctor, we use the braiding and its reverse, respectively. If $\cM$ happens to be invertible as a module, then both $\alpha_{\pm}$ are braided equivalences, so there exists $\Theta_{\cM}\in EqBr(\cC)$ such that 

$$\alpha_{+}=\alpha_{-}\circ \Theta_{\cM}.$$

\noindent By \cite{MR2677836}, Theorem 5.4, the assignment $\cM\rightarrow \Theta_{M}$ naturally extends to a monoidal functor 

\begin{equation}\label{F}
\underline{F}:\underline{Pic}(\cC)\rightarrow \underline{EqBr}(\cC).
\end{equation}

 A major result that is particularly important for gauging is that $\underline{F}$ is an equivalence if $\cC$ is non-degenerate, and thus there is an inverse $\underline{F}^{-1}$. Although the inverse is explicit, a serious difficulty is that it is tricky to explicitly work out the tensorator of $\underline{F}^{-1}$ in enough detail to be useful. For details on all of the above, see \cite{MR2677836}, Section 5.4.

%%%%%%%%%%%%%%%%%%%%%%

In any case, when $\cC$ is non-degenerate, we see that we have a bijection between equivalence classes of monoidal functors $\underline{G}\rightarrow \underline{Pic}(\cC)$ and monoidal functors $\underline{G}\rightarrow \underline{EqBr}(\cC)$, simply by composing with the monoidal equivalence $\underline{F}$. The latter sort of monoidal functor has a designated name.

\begin{defn} Let $G$ be a finite group and $\cC$ a braided fusion category. A \textit{braided categorical action} of $G$ on $\cC$ is a monoidal functor $\underline{\rho}:\underline{G}\rightarrow \underline{EqBr}(\cC)$.
\end{defn}

Braided categorical actions appear naturally in many contexts, and there are many interesting and well-understood examples (see Section \ref{PermutationActions} for the examples that are the purpose of this paper). We often  abbreviate ``braided categorical action" to  ``categorical" action, and whether or not it is braided should be clear from context. We now know how to take a braided categorical action of $G$ on $\cC$ and produce a monoidal functor $\underline{G}\rightarrow \underline{Pic}(\cC)$. But where do we get braided categorical actions? 

In general, the idea is to start simply with a homomorphism $\rho: G\rightarrow EqBr(\cC)$. Then we work backwards, trying to lift $\rho$ to a monoidal functor $\underline{\rho}: \underline{G}\rightarrow \underline{EqBr}(\cC)\cong \underline{Pic}(\cC)$. We then try to lift this to a tri-functor $\underline{\underline{\rho}}:\underline{\underline{G}}\rightarrow \underline{\underline{Pic}}$, which is what we need for a braided G-crossed extension. However, at each stage of categorification, there is a cohomological obstruction to obtaining a lift. If this obstruction vanishes, we can categorify the morphism, and the different possibilities of liftings will be a torsor over a certain cohomology group (different at each stage). We explain this process starting from the bottom.

\subsection{First extensions.}\label{firstext}

Let us begin with a homomorphism $\rho:G\rightarrow EqBr(\cC)\cong Pic(\cC)$. The goal is to extend this to a monoidal functor $\underline{\rho}:\underline{G}\rightarrow \underline{Pic}(\cC)\cong \underline{EqBr}(\cC)$, where here we identify $\underline{Pic}(\cC)\cong \underline{EqBr}(\cC)$ via $F$ (see \eqref{F}).

The first step is to choose a representative invertible module $\cC_{g}$ for each $g\in G$, such that $[\cC_{g}]=\rho(g)\in Pic(\cC)$. Then we need (equivalence classes of) module functors 

$$M_{g,h}:\cC_{g}\boxtimes_{\cC} \cC_{h}\rightarrow \cC_{gh},$$

\noindent which give the tensorator for the monoidal functor. But for this to be monoidal, we need these to satisfy the relation 

\begin{equation}\label{quasitensor}
M_{gh,k}\circ (M_{g,h}\boxtimes_{\cC} Id_{k})\cong M_{g,hk}\circ (Id_{g}\boxtimes_{\cC} M_{h,k} )\,,
\end{equation} 

\noindent $h\in G$, where this isomorphism must be as module functors. Note that the $\cC$-module endofunctors of an invertible module category are given by multiplication by an invertible object in $\cC$, hence for \textit{any} choice of $M_{g,h}:\cC_{g}\boxtimes_{\cC} \cC_{h}\rightarrow \cC_{gh}$, we define 

$$T(\cC,M)_{g,h,k}:= M_{gh,k}\circ (M_{g,h}\boxtimes_{\cC} Id_{k}) \circ \left(M_{g,hk}\circ (Id_{g}\boxtimes_{\cC} M_{h,k} ) \right)^{-1}\in \operatorname{Inv}(\cC)$$

These assemble into a 3-cocycle $T(\cC,M)\in Z^{3}(G, \operatorname{Inv}(\cC))$, where $\operatorname{Inv}(\cC)$ is a $G$-module via $\rho$. We let $o_{3}(\rho)\in H^{3}(G, \operatorname{Inv}(\cC))$ be the cohomology class of $T(\cC,M)$ in $H^{3}(G, \operatorname{Inv}(\cC))$. While $T(\cC,M)$ may depend on the choices $M_{g,h}$ the corresponding class $o_{3}(\rho)$ only depends on the homomorphism $\rho$. The results of \cite{MR2677836} say that $T(\cC,M)\cong Id_{ghk}$ if and only if $o_{3}$ is trivial. Thus we can lift $\rho$ to a monoidal functor (or equivalently, a categorical action) $\underline{\rho}:\underline{G}\rightarrow \underline{Pic}(\cC)\cong \underline{EqBr}(\cC)$ if and only if $o_{3}(\rho)$ vanishes. If $o_{3}(\rho)$ does indeed vanish, then the set of equivalence classes of liftings $\underline{\rho^{\prime}}:\underline{G}\rightarrow \underline{Pic}(\cC)\cong \underline{EqBr}(\cC)$ which truncate to the given homomorphism $\rho$ form a torsor over the group $H^{2}(G, \operatorname{Inv}(\cC))$. 

Now, if we have taken the obstruction theory path and demonstrated the existence of a lifting by showing $o_{3}(\rho)$ is trivial, we know the liftings form a torsor over $H^{2}(G, \operatorname{Inv}(\cC))$, but we have no specified base point so we don't know how to explicitly identify the liftings with $H^{2}$-cohomology classes. This is necessary for computing the next obstruction. There is a way to remedy this situation and avoid explicitly computing the obstruction at the same time. The point is that if we directly construct a monoidal functor (equivalently, a categorical action) $\underline{\rho}: \underline{G}\rightarrow \underline{EqBr}(\cC)$ which truncates to $\rho$, then we have found a specific lifting. By the above discussion, this automatically implies the obstruction $o_{3}(\rho)$ vanishes, and simultaneously gives us a basepoint for our torsor. Indeed, constructing a categorical action is literally constructing a witness to the vanishing of $o_{3}(\rho)$. This is the route we take for permutation actions in Section $4$. 

%Given a $c$, the collection of choices of $M$ such that the class of $T(\cC,M)$ vanishes is a torsor over the group $H^{2}(G, Inv(\cC))$, where again, $Inv(\cC)$ is a $G$ module via $\pi$.

%Using the monoidal equivalence $\underline{F}: \underline{EqBr}(\cC)\rightarrow \underline{Pic(\cC)}$, we see that choosing $M$ such that the obstruction vanishes is equivalent (using $F$) extending the original homomorphism $\pi:G\rightarrow EqBr(\cC)$ to a monoidal functor $\underline{\pi}:\underline{G}\rightarrow \underline{Pic}(\cC)\cong \underline{EqBr}(\cC)$. A monoidal functor from $\underline{G}$ to $\underline{EqBr}$ is called a \textit{braided categorical action}, or just a categorical action if it is clear from context wether this action is braided.

%Sometimes (as in our case), we \textit{start} with a categorical action which we can describe completely. Using $\underline{F}^{-1}$, we obtain a monoidal functor $\underline{G}\rightarrow \EqBr(\cC)\cong \underline{Pic}(\cC)$. However, the structure of the monoidal functor $\underline{F}^{-1}$ is sufficiently complicated that it is difficult in general to explicitly describe the monoidal functor.

\subsection{Second extensions.}\label{secondext}

Now suppose we are given a monoidal functor $\underline{\rho}: \underline{G}\rightarrow \underline{Pic}(\cC)\cong \underline{EqBr}(\cC)$. Then we have invertible $\cC$-module categories $\cC_{g}$ and $\cC$-module functors $M_{f,g}$ satisfying \eqref{quasitensor}. However, to extend this to a tri-functor, we need to pick explicit $\cC$-module isomorphisms 

$$A_{g,h,k}: M_{gh,k}\circ (M_{g,h}\boxtimes_{\cC} Id_{k})\rightarrow M_{g,hk}\circ (Id_{g}\boxtimes_{\cC} M_{h,k} ) $$

\noindent These $A$ are meant to give us associators for the corresponding $G$-extension, hence must satisfy the pentagon equation (see \cite{EGNO}). Inverting one side of the pentagon equation, we obtain a module functor automorphism of the module functor

$$M_{fgh,k}\circ (M_{fg,h}\boxtimes_{\cC} Id_{k})\circ (M_{f,g}\boxtimes_{\cC} Id_{h}\boxtimes_{\cC} Id_{k}),$$

\noindent defined by

\begin{dmath}\label{associativity}
v(\cC,M,A)_{f,g,h,k}=(M_{f,g}\boxtimes_{\cC} Id_{h}\boxtimes_{\cC} Id_{k})A^{-1}_{fg,h,k}\circ (Id_{f}\boxtimes_{\cC} Id_{g}\boxtimes_{\cC} M_{h,k})A^{-1}_{f,g,hk} \circ M_{f,ghk}(id_{f}\boxtimes_{\cC} A_{g,h,k} )\circ A_{f,gh,k}(Id_{f}\boxtimes_{\cC} M_{g,h}\boxtimes_{\cC}Id_{k})\circ M_{fgh,k}(A_{f,g,h}\boxtimes_{\cC} id_{k} )
\end{dmath}

But every module automorphism of a module equivalence is a scalar times the identity, which is easily seen to satisfy the cocycle condition, and thus we may view

$$v(\cC,M,A) \in Z^{4}(G, \C^{\times})$$

\noindent We then define $o_{4}$ to be the image of $v(\cC,M,A)$ in $H^{4}(G, \C^{\times})$. It turns out (unsurprisingly) that the isomorphisms $A_{f,g,h}$ can be rescaled to satisfy the pentagon equations (and thus give us a genuine monoidal category) if and only if the class $o_{4}\in H^{4}(G, \C^{\times})$ is trivial. Furthermore, this cohomology class does not depend on the choice of $A_{f,g,h}$, and thus is canonically associated to a monoidal functor $\underline{G}\rightarrow \underline{Pic}(\cC)$. See \cite{MR2677836}, Section 8.6 for details.

In summary, to gauge by a braided categorical action $\underline{\rho}:\underline{G}\rightarrow \underline{EqBr}(\cC)\cong \underline{Pic}(\cC)$, we must first extend $\underline{\rho}$ to a tri-functor, in order to obtain a braided $G$-crossed extension of $\cC$. This amounts to checking that an obstruction $o_{4}(\underline{\rho})\in H^{4}(G, \C^{\times})$ vanishes. We emphasize here that $o_{4}$ in general is quite difficult to compute. 

%The more general extension theory for fusion categories (rather than braided $G$-crossed extension theory for braided fusion categories) follows along the same general lines, replacing $\underline{\underline{Pic}}(\cC)$ with the 3-category $\underline{\underline{BrPic}}(\cC)$. In this context, the meanings of these obstructions and torsors can be clarified by considering a nearly classical example: the extensions $Vec_{\tilde{\omega}}(K)$ by $G$ of $N$-graded vector spaces $Vec_\omega(N)$. This is worked out in detail in the Appendix of \cite{MR2677836}. 

%Here, $\omega\in H^3(N,\bbC^\times)$ and $\tilde{\omega}\in H^3(K,\bbC^\times)$ twist the associativity isomorphisms. 
%An action $\rho$ of $G$ on $Vec_\omega(N)$ involves amongst other things a homomorphism $\overline{\rho}:G\rightarrow Out(N)$, and the $o_3$ obstruction recovers the Eilenberg--MacLane obstruction in $H^3(G,Z(N))$ for the existence of a group extension $1\rightarrow N\rightarrow K\rightarrow G\rightarrow 1$. If that obstruction vanishes, then the possible group extensions $K$ form a torsor over $H^2(G,Z(N))$, and this is part of the $H^2$-torsor for $\underline{\rho}$. The $o_4$ obstruction (and the remainder of the $o_3$ one) concern the existence of a lift of $\omega\in H^3(N,\bbC^\times)$ to $\tilde{\omega}\in H^3(K,\bbC^\times)$. 
%The $H^3$-torsor here reflects the elementary fact that multiplying any particular solution $\tilde{\omega}\in H^3(K,\bbC^\times)$ by the lift (inflation) of any $\omega'\in H^3(G,\bbC^\times)$ will give another solution.

We now point out some basic properties of the $o_{4}$ obstruction that will be fundamental in our arguments. Suppose we have $\underline{\rho}: \underline{G}\rightarrow \underline{EqBr}(\cC)\cong \underline{Pic}(\cC)$ for a non-degenerate braided fusion category $\cC$. If $H\le G$ then we can restrict $\underline{\rho}$ to obtain a categorical action of $H$. Let $Res^{G}_{H}: H^{4}(\cdot , \C^{\times})$ denote the restriction map on cohomology. It follows immediately from the definitions that

\begin{equation}\label{restriction}
o_{4}(\underline{\rho}|_{\underline{H}}) =Res^{G}_{H}o_{4}(\underline{\rho}).
\end{equation} 

\noindent Now, suppose $\cD$ is another non-degenerate braided fusion category, and we have ${\underline{\pi}}:\underline{H}\rightarrow \underline{EqBr}(\cD)\cong {\underline{Pic}}(\cD)$. We can define the obvious \textit{product action} $\underline{\rho}\times \underline{\pi}: \underline{G\times H}\rightarrow \underline{EqBr}(\cC\boxtimes \cD)$ by 

$$(\rho\times\pi)(g\times h)(X\boxtimes Y):=\rho(g)(X)\boxtimes \pi(h)(Y)$$

\noindent with tensorator natural isomorphisms 

$$\mu^{\rho\times \pi}_{g_{1}\times h_{1}, g_{2}\times h_{2}}:=\mu^{\rho}_{g_1\times g_{2}}\boxtimes\mu^{\pi}_{h_1\times h_{2}} $$ 

\medskip

The defining properties of a braided categorical action are easy to verify. We have the following lemma.

\begin{lem}\label{product} If both $o_{4}(\underline{\rho})$ and $o_{4}(\underline{\pi})$ vanish, then $o_{4}(\underline{\rho}\times \underline{\pi})$ vanishes.
\end{lem}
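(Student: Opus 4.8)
The plan is to show that the pentagon-obstruction $4$-cochain for the product action is, up to choices, the pointwise product of the pullbacks of the individual obstruction cochains along the two coordinate projections $\mathrm{pr}_{G}\colon G\times H\to G$ and $\mathrm{pr}_{H}\colon G\times H\to H$. First I would fix the data realizing $\underline{\rho}$ and $\underline{\pi}$ on the $\underline{Pic}$-side: representative invertible module categories $\cC_{g}$ over $\cC$ and $\cD_{h}$ over $\cD$, module functors $M^{\rho}_{g,g'}\colon\cC_{g}\boxtimes_{\cC}\cC_{g'}\to\cC_{gg'}$ and $M^{\pi}_{h,h'}$, and (since by Section~\ref{secondext} the class $o_{4}$ does not depend on them) module isomorphisms $A^{\rho}_{g,g',g''}$ and $A^{\pi}_{h,h',h''}$ as in~\eqref{associativity}. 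The key structural input is that the Deligne product commutes with relative tensor product of module categories and with balanced tensor product of module functors, so that under the canonical equivalences $(\cC_{g}\boxtimes\cD_{h})\boxtimes_{\cC\boxtimes\cD}(\cC_{g'}\boxtimes\cD_{h'})\simeq(\cC_{g}\boxtimes_{\cC}\cC_{g'})\boxtimes(\cD_{h}\boxtimes_{\cD}\cD_{h'})$ one has $\Theta_{\cC_{g}\boxtimes\cD_{h}}=\Theta_{\cC_{g}}\boxtimes\Theta_{\cD_{h}}$, whence $\underline{F}(\cC_{g}\boxtimes\cD_{h})=\rho(g)\boxtimes\pi(h)=(\rho\times\pi)(g\times h)$. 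Thus $\underline{\rho}\times\underline{\pi}$ is realized (via $\underline{F}^{-1}$) by the module categories $(\cC\boxtimes\cD)_{g\times h}=\cC_{g}\boxtimes\cD_{h}$, tensorators $M^{\rho\times\pi}_{(g,h),(g',h')}=M^{\rho}_{g,g'}\boxtimes M^{\pi}_{h,h'}$, and associators $A^{\rho\times\pi}=A^{\rho}\boxtimes A^{\pi}$.

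Next I would substitute this data into formula~\eqref{associativity}. Each of the five morphisms composed there is, under the above identifications, the Deligne product of the corresponding morphism for $\underline{\rho}$ with that for $\underline{\pi}$; since the Deligne product is functorial, their composite is again such a product, so $v(\cC\boxtimes\cD,M^{\rho\times\pi},A^{\rho\times\pi})_{(g_{1},h_{1}),\dots,(g_{4},h_{4})}=v(\cC,M^{\rho},A^{\rho})_{g_{1},\dots,g_{4}}\boxtimes v(\cD,M^{\pi},A^{\pi})_{h_{1},\dots,h_{4}}$. But a module-functor automorphism of a module equivalence is a scalar times the identity, and the Deligne product of $\lambda\cdot\id$ with $\mu\cdot\id$ is $\lambda\mu\cdot\id$ by bilinearity of $\boxtimes$; therefore $v(\cC\boxtimes\cD,M^{\rho\times\pi},A^{\rho\times\pi})=\mathrm{pr}_{G}^{*}\,v(\cC,M^{\rho},A^{\rho})\cdot\mathrm{pr}_{H}^{*}\,v(\cD,M^{\pi},A^{\pi})$ as elements of $Z^{4}(G\times H,\C^{\times})$, with trivial coefficients throughout. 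Passing to cohomology, $o_{4}(\underline{\rho}\times\underline{\pi})=\mathrm{pr}_{G}^{*}\,o_{4}(\underline{\rho})\cdot\mathrm{pr}_{H}^{*}\,o_{4}(\underline{\pi})$ in $H^{4}(G\times H,\C^{\times})$, so if $o_{4}(\underline{\rho})$ and $o_{4}(\underline{\pi})$ both vanish then so does $o_{4}(\underline{\rho}\times\underline{\pi})$.

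The only real content lies in the identifications of the first paragraph: one must check that the $G\times H$-grading by $\cC_{g}\boxtimes\cD_{h}$ is compatible with the relative tensor products, the balanced tensor products of functors, and the various associativity constraints appearing in~\eqref{associativity}, so that every structure morphism genuinely splits as a Deligne product of a $\cC$-part and a $\cD$-part. This is essentially the statement that $\boxtimes$ is a (weak) monoidal functor on the relevant $2$-categories of module categories, which is standard (see~\cite{MR3107567},~\cite{MR2677836}); once it is in place, the multiplicativity of the attached scalars and the cohomological conclusion are formal, requiring no explicit cocycle manipulation beyond tracking which factor each term lands in. I would therefore cite the compatibility of Deligne and relative tensor products rather than reprove it, keeping the argument short.
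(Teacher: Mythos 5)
Your proof is correct, but it takes a genuinely different route from the one in the paper, and it is worth contrasting them. The paper does not compute anything at the cochain level: it invokes the equivalence from~\cite{MR2677836}, Theorem 7.12, between vanishing of $o_{4}$ and existence of a braided $G$-crossed extension with prescribed action on the trivial component, and then simply observes that if $\cE$ and $\cF$ are braided $G$- and $H$-crossed extensions of $\cC$ and $\cD$ realizing $\underline{\rho}$ and $\underline{\pi}$, then $\cE\boxtimes\cF$ is a braided $(G\times H)$-crossed extension of $\cC\boxtimes\cD$ whose action on the trivial component is $\underline{\rho}\times\underline{\pi}$. That is a purely existential, two-line argument requiring only the (routine) verification that the Deligne product of braided crossed extensions is again one. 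Your argument instead works directly with the representing data $(\cC_{g},M^{\rho},A^{\rho})$, $(\cD_{h},M^{\pi},A^{\pi})$ and the explicit $4$-cochain from~\eqref{associativity}, and shows the stronger equality $o_{4}(\underline{\rho}\times\underline{\pi})=\mathrm{pr}_{G}^{*}o_{4}(\underline{\rho})\cdot\mathrm{pr}_{H}^{*}o_{4}(\underline{\pi})$ in $H^{4}(G\times H,\C^{\times})$. This is precisely the ``much stronger statement'' the paper alludes to (but deliberately avoids proving) in the remark following Lemma~\ref{trivialproduct}: from your formula and the K\"unneth splitting of $H^{4}(G\times H,\C^{\times})$ one also gets the converse, i.e.\ vanishing of the product obstruction forces vanishing of both factors, subsuming Lemma~\ref{trivialproduct} as well.

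The trade-off is that your route has more to check. The paper's product action $\underline{\rho}\times\underline{\pi}$ is defined on the $\underline{EqBr}$ side, while the cochain $v(\,\cdot\,,M,A)$ lives on the $\underline{Pic}$ side; your identification of the two via $\underline{F}^{-1}$ and the claim that the tensorators $M^{\rho\times\pi}$ factor as $M^{\rho}\boxtimes M^{\pi}$ is exactly the point where the paper warns that the tensorator of $\underline{F}^{-1}$ is ``tricky to work out in enough detail to be useful.'' The compatibility you need ($\Theta_{\cC_{g}\boxtimes\cD_{h}}\cong\Theta_{\cC_{g}}\boxtimes\Theta_{\cD_{h}}$, and that $\underline{F}$ intertwines $\boxtimes$ of module categories with $\boxtimes$ of braided autoequivalences, tensorators included) does hold, but it is not free; a cleaner way to sidestep the warning is to define the product action directly at the $\underline{Pic}$ level as $(\cC_{g}\boxtimes\cD_{h},\,M^{\rho}\boxtimes M^{\pi})$ and then check it agrees with the paper's $\underline{EqBr}$-side definition up to equivalence of monoidal functors, rather than transporting through $\underline{F}^{-1}$. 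With that rewording your argument is complete and yields a sharper statement than the one in the paper, at the cost of more bookkeeping than the paper's existence-of-extensions shortcut.
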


\begin{proof}
By \cite{MR2677836} Theorem 7.12, the $o_{4}$ obstruction associated to a categorical action vanishes if and only if there exists a braided $G$-crossed extension whose categorical action on the trivially graded component is the one we started with. If $\cC\subseteq \cE$ is a braided $G$-crossed extension corresponding to $\underline{\rho}$ and $\cD\subseteq \cF$ is a braided $H$-crossed extension corresponding to $\underline{\pi}$, then $\cE\boxtimes \cF$ has a canonical structure of a braided $G\times H$-crossed extension of $\cC\boxtimes \cD$ which restricts to the product action $\underline{\rho}\times \underline{\pi}$ on the trivially graded component.

\end{proof}

Let $\underline{1}_{\cD}: \underline{1}\rightarrow \underline{EqBr}(\cD)$ be the trivial categorical action. We have the following easy lemma.

\begin{lem}\label{trivialproduct} $o_{4}(\underline{\rho}\times \underline{1}_{\cD})$ vanishes if and only if $o_{4}(\underline{\rho})$ vanishes
\end{lem}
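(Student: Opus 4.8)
The plan is to deduce this from the two lemmas already in hand. Lemma \ref{product} tells us that vanishing of $o_4(\underline{\rho})$ and of $o_4(\underline{1}_{\cD})$ together imply vanishing of $o_4(\underline{\rho}\times\underline{1}_{\cD})$, so for the ``only if'' direction it suffices to observe that $o_4(\underline{1}_{\cD})$ vanishes. This is immediate: the trivial action $\underline{1}\to\underline{EqBr}(\cD)$ admits the braided $1$-crossed extension $\cD$ itself (a $G$-crossed extension for the trivial group is just a braided fusion category), so by \cite{MR2677836} Theorem 7.12 the obstruction $o_4(\underline{1}_{\cD})$ is trivial. Hence $o_4(\underline{\rho})$ trivial $\Rightarrow$ $o_4(\underline{\rho}\times\underline{1}_{\cD})$ trivial.

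For the converse, the cleanest route is to identify $\underline{\rho}$ as the restriction of $\underline{\rho}\times\underline{1}_{\cD}$ along an appropriate group embedding, and then apply \eqref{restriction}. Concretely, $G \hookrightarrow G\times 1$, $g\mapsto g\times 1$, is an isomorphism of groups, and under this identification the categorical action $\underline{\rho}\times\underline{1}_{\cD}$ restricted to $\underline{G}\cong\underline{G\times 1}$ is visibly the product action $(\rho\times 1)(g\times 1)(X\boxtimes Y)=\rho(g)(X)\boxtimes Y$ on $\cC\boxtimes\cD$. But this product action on $\cC\boxtimes\cD$ is itself of the form ``$\underline{\rho}$ acting on the tensor factor $\cC$,'' i.e. it is $\underline{\rho}\times\underline{1}_{\cD}$ again — so what we actually need is that $o_4(\underline{\rho}\times\underline{1}_{\cD})$ trivial implies $o_4(\underline{\rho})$ trivial, which is not a pure restriction statement. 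Instead I would argue at the level of extensions directly, mirroring the proof of Lemma \ref{product}: if $\cC\boxtimes\cD\subseteq\cG$ is a braided $G$-crossed extension realizing $\underline{\rho}\times\underline{1}_{\cD}$, then I claim $\cG$ can be ``divided by $\cD$'' to produce a braided $G$-crossed extension of $\cC$ realizing $\underline{\rho}$. The mechanism is the following: $\cD$ is a non-degenerate braided subcategory of the trivially-graded part $\cC\boxtimes\cD$, hence of $\cG$, and it is fixed pointwise (as a braided subcategory, up to coherent isomorphism) by the $G$-action since the action there is $\underline{1}_{\cD}$; by Müger's theorem on relative centers / the factorization $\cG\simeq(\text{relative centralizer of }\cD\text{ in }\cG)\boxtimes\cD$ for a non-degenerate subcategory, we get $\cG\simeq\cG'\boxtimes\cD$ with $\cG'$ inheriting a braided $G$-crossed structure whose trivial component is $\cC$ and whose action is $\underline{\rho}$. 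Then Theorem 7.12 of \cite{MR2677836} gives $o_4(\underline{\rho})=0$.

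The step I expect to be the main obstacle is making this ``division by $\cD$'' precise at the level of $G$-crossed categories: one needs the relative centralizer construction to be compatible with the $G$-grading and the $G$-crossed braiding, and one needs the hypothesis that the $G$-action restricts trivially on $\cD$ to guarantee that $\cD$ actually lands inside the $G$-fixed structure in the appropriate sense (not merely that each $g$ preserves $\cD$ up to isomorphism, but that the tensorators are compatible). Depending on how much machinery the paper wants to invoke, it may be cleaner to sidestep this and instead prove the converse purely cohomologically: unwind the construction of $v(\cC\boxtimes\cD, M\boxtimes N, A\boxtimes B)$ from Section \ref{secondext} for the product data, observe that it equals $v(\cC,M,A)$ because all the $\cD$-side module functors and associators can be taken to be identities (the module category for $\underline{1}_{\cD}$ being $\cD$ itself with identity structure maps, contributing trivial scalars), and conclude $o_4(\underline{\rho}\times\underline{1}_{\cD})=o_4(\underline{\rho})$ as classes in $H^4(G,\C^\times)$ — which gives both directions at once. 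I would lead with this cohomological identity as the primary argument, since it is short and self-contained given Section \ref{secondext}, and mention the extension-theoretic picture only as motivation.
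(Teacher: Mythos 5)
Your easy direction matches the paper exactly. For the converse, you correctly diagnose the difficulty with the ``divide by $\cD$'' idea---making Müger's factorization compatible with the $G$-grading and $G$-crossed braiding directly on $\cG$ is genuinely delicate---but you then either leave it as an obstacle or propose to fall back on a cohomological computation of $v(\cC\boxtimes\cD,M\boxtimes N,A\boxtimes B)$. The paper sidesteps both difficulties with a trick you did not find: \emph{equivariantize first}. Starting from a braided $G$-crossed extension $\cE$ of $\cC\boxtimes\cD$ realizing $\underline{\rho}\times\underline{1}_{\cD}$, pass to the equivariantization $\cE^G$, which is an honest non-degenerate braided fusion category (no $G$-crossed bookkeeping). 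Since $G$ acts trivially on $\cD$, the subcategory $\cD$ embeds in $\cE^G$ as a non-degenerate full subcategory, so Müger's centralizer theorem applies cleanly to give $\cE^G\simeq\cD'\boxtimes\cD$; moreover $\cC^G\subseteq\cD'$. De-equivariantizing along $\operatorname{Rep}(G)\subset\cC^G\subset\cD'$ then produces $\cE\simeq\cF\boxtimes\cD$ with $\cF$ a braided $G$-crossed extension of $\cC$ realizing $\underline{\rho}$, hence $o_4(\underline{\rho})=0$. In other words, the paper applies the factorization theorem on the non-$G$-crossed side and transports the splitting back through (de)equivariantization, avoiding exactly the compatibility issues you flagged.

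On your proposed ``primary'' argument, the direct cohomological identity $v(\cC\boxtimes\cD,M\boxtimes N,A\boxtimes B)=v(\cC,M,A)$: this would indeed give both directions at once (and is stronger, yielding equality of the classes, not just of their vanishing), but it is not as short as you suggest. The obstruction cocycle lives on the $\underline{\underline{Pic}}$ side, and the product action is defined on the $\underline{EqBr}$ side; to write down the module data $M\boxtimes N$, $A\boxtimes B$ you must transport the product through the equivalence $\underline{F}$ of \eqref{F}, which requires knowing that $\alpha$-induction and the tensorator of $\underline{F}^{-1}$ are compatible with Deligne products. The paper explicitly flags working out $\underline{F}^{-1}$ as ``tricky'' and states, just after these lemmas, that it prefers the extension-theoretic arguments precisely because they are ``elementary, only using the existence of braided $G$-crossed extensions rather than detailed knowledge of extension theory itself.'' So your cohomological route is a legitimate alternative, but it is the heavier one, and the remark following the lemma is essentially the authors telling you they chose not to take it.
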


\begin{proof}
One direction follows from Lemma \ref{product}. For the other, suppose $o_{4}(\underline{\rho}\times \underline{1}_{\cD})$ vanishes. Then there exists a braided $G$-crossed fusion category $\cE$ with $(\cE)_{e}=\cC\boxtimes \cD$, whose categorical action on the trivial component is $\underline{\rho}\times \underline{1}_{\cD}$. But then the equivariantization $\cE^{G}$ is non-degenerate and contains $\cD$ as a full subcategory, so $\cE^{G}=\cD^{\prime}\boxtimes \cD$. But $\cC^{G}\subseteq D^{\prime}$, and thus de-equivariantizing we see that $\cE\cong \cF\boxtimes \cD$, where $\cF$ is a braided $G$-crossed extension on $\cC$ which restricts to $\underline{\rho}$ on $\cC$, hence $o_{4}(\underline{\rho})$ vanishes. 
\end{proof}

We remark that a much stronger statement generalizing both the above lemmas is true, namely $o_{4}(\underline{\rho}\times \underline{\pi}_{\cD})$ vanishes if and only if both $o_{4}(\underline{\rho})$ and $o_{4}(\underline{\pi}_{\cD})$ vanish. However, our later arguments only require the above statements, and we prefer to give these since their arguments are elementary, only using the existence of braided $G$-crossed extensions rather than detailed knowledge of extension theory itself.

\section{Group cohomology lemmas.}

In this section, we discuss some lemmas relating to the group cohomology of the symmetric groups. First we will discuss some relevant facts concerning cohomology groups with trivial action on the coefficient modules.

Consider the short exact sequence of abelian groups

$$0\rightarrow \bbZ \buildrel j\over\hookrightarrow\bbC\buildrel e^{2\pi i z}\over\longrightarrow \C^{\times}\rightarrow 0.$$
where $\C$ is the additive group of complex numbers, $j:\Z\hookrightarrow \C$ is the usual inclusion. This induces a long exact sequence in cohomology 

\begin{equation}\label{Long2}
\cdots\rightarrow H^k(G,\bbZ)\buildrel j\over\rightarrow H^k(G,\C)\buildrel e^{2\pi i z}\over\longrightarrow
H^k(G,\C^{\times}) \buildrel\delta\over\rightarrow H^{k+1}(G,\bbZ)\rightarrow\cdots
\end{equation}

\noindent Since $\C$ is a $\Q$-vector space, $H^{n}(G,\C)=0$, and for every $n$, the long exact sequence gives us $0\rightarrow H^{n}(G,\C^{\times})\buildrel \delta \over\rightarrow H^{n+1}(G, \Z)\rightarrow 0$, and thus the connecting map $\delta$ furnishes a canonical isomorphism. Furthermore, naturality of the long exact sequence shows that the following diagrams commute for all $n$ and all subgroups $H\le G$:

\[ \begin{tikzcd}
H^{n}(G,\C^{\times}) \arrow{r}{\delta} \arrow[swap]{d}{Res^{G}_{H}} & H^{n+1}(G,\Z) \arrow{d}{Res^{G}_{H}} \\%
H^{n}(H,\C^{\times}) \arrow{r}{\delta}& H^{n+1}(H,\Z)
\end{tikzcd}
\]

\noindent This allows us to consider questions concerning $H^{4}(G, \C^{\times})$ and restrictions to subgroups by asking the same question for $H^{5}(G, \Z)$ and restriction, which are easily computable in computer algebra programs such as GAP. 

Recall the following deep result concerning the cohomology theory of symmetric groups, which is referred to as \textit{Nakaoka Stability}.

\begin{thm}\label{Stabilization}\cite[Corollary 6.7]{MR0112134}. For any abelian group $M$, the restriction map $$Res^{S_{n}}_{S_{2k}}:H^{k}(S_{n},M)\rightarrow H^{k}(S_{2k}, M)$$ is an isomorphism for all $n\ge 2k$.
\end{thm}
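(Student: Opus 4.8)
Since this is the classical Nakaoka stabilization theorem, I will sketch the modern homological-stability proof rather than Nakaoka's original Hopf-algebra computation. The plan is to reduce the cohomological statement to integral homology and then run the standard ``injective words'' spectral sequence. For the reduction: the restriction map $Res^{S_n}_{S_{2k}}$ is dual to the map $H_*(S_{2k};\Z)\to H_*(S_n;\Z)$ induced by the inclusion of the subgroup permuting $\{1,\dots,2k\}$, and by the universal coefficient theorem (trivial coefficients) $H^k(S_n;M)$ sits in a natural short exact sequence with outer terms $\mathrm{Ext}^1_\Z(H_{k-1}(S_n;\Z),M)$ and $\mathrm{Hom}_\Z(H_k(S_n;\Z),M)$, with $Res^{S_n}_{S_{2k}}$ a map of such sequences. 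So by the five lemma it suffices to prove that the inclusion $S_m\hookrightarrow S_{m+1}$ induces an isomorphism $H_j(S_m;\Z)\to H_j(S_{m+1};\Z)$ whenever $m\ge 2j$; composing these from $S_{2k}$ up to $S_n$ settles degrees $j=k-1$ and $j=k$, which is all the UCT sequences see. (Equivalently one can run the entire argument directly in cohomology with $M$-coefficients, replacing induced modules by coinduced ones; I find the integral version cleaner to phrase.)

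For homological stability itself, let $W_m$ be the semisimplicial set of \emph{injective words} on $\{1,\dots,m\}$: a $p$-simplex is a tuple $(a_0,\dots,a_p)$ of pairwise distinct letters, the $i$-th face deleting $a_i$. Two inputs are needed. First, the connectivity input: $|W_m|$ is $(m-2)$-connected --- in fact homotopy equivalent to a wedge of $(m-1)$-spheres. Second, the symmetry input: $S_m$ acts transitively on the set of $p$-simplices with $\mathrm{Stab}(1,2,\dots,p{+}1)\cong S_{m-1-p}$, so $C_p(W_m)\cong \mathrm{Ind}_{S_{m-1-p}}^{S_m}\Z$ as a $\Z[S_m]$-module, and the inclusion $W_m\subset W_{m+1}$ (words avoiding $m{+}1$) is $S_m$-equivariant. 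From the first input the augmented chain complex $\widetilde C_*(W_m)$ is exact through degree $m-2$, which yields a spectral sequence with $E^1_{p,q}=H_q(S_m;C_p(W_m))\cong H_q(S_{m-1-p};\Z)$ by Shapiro's lemma, computing $H_*(S_m;\Z)$ through a range. The differential $d^1\colon E^1_{p,q}\to E^1_{p-1,q}$ is the alternating sum over the $p+1$ face maps; each summand is, up to conjugation by a permutation of $\{1,\dots,m-p\}$ (hence trivial on homology), the stabilization map $H_q(S_{m-1-p};\Z)\to H_q(S_{m-p};\Z)$, so the alternating sum telescopes to $0$ or a single stabilization map according to the parity of $p$. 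A double induction on $(j,m)$ --- assuming stabilization is an isomorphism in degrees $\le q$ once the symmetric-group rank is $\ge 2q$, and a surjection one step below --- makes the rows of the $E^1$-page exact away from $p=0$ throughout the relevant range, which forces the edge map $H_q(S_{m-1};\Z)\to H_q(S_m;\Z)$ to be an isomorphism for $m\ge 2q+1$ and a surjection for $m\ge 2q$. This closes the induction with precisely the range $m\ge 2j$, and hence proves the theorem.

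The main obstacle is the connectivity input: showing $|W_m|$ is $(m-2)$-connected is the one step that is not formal homological bookkeeping, and it is proved by an explicit shelling / discrete Morse argument (as in Bj\"orner and Wachs, or Farmer, or the treatment in Randal-Williams and Wahl). The only other care required is purely in the bookkeeping --- tracking the signs coming from the face maps, checking that the interposed conjugations are genuinely inner in $S_{m-p}$ so that they act trivially on homology, and running the induction with the correct ranges --- but once the connectivity statement is granted, this is routine.
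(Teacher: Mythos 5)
The paper does not prove this theorem; it cites it directly to Nakaoka's original 1960 paper, so there is no in-paper argument to compare against. Your proposal is the standard modern route (UCT dualization to integral homology, then the injective-words semisimplicial set and Shapiro's lemma feeding a hyperhomology spectral sequence), and at the level of the overall plan it is the right circle of ideas.

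However, the bookkeeping in the stability range is off, and the slip is exactly at the boundary case that the theorem needs. You assert that the edge map $H_q(S_{m-1};\bbZ)\to H_q(S_m;\bbZ)$ comes out ``an isomorphism for $m\ge 2q+1$ and a surjection for $m\ge 2q$.'' The surjectivity claim, read as ``source rank $m-1\ge 2q-1$,'' is simply false: $H_2(S_3;\bbZ)=0\to H_2(S_4;\bbZ)=\bbZ/2$ is not onto, yet here $m-1=3=2q-1$. The actual output of the injective-words spectral sequence, run in the way you sketch (double induction, rows of $E^1$ exact away from $p=0$ in the stable range), is surjectivity for source rank $\ge 2q$ and \emph{isomorphism for source rank $\ge 2q+1$}. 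That is one worse than Nakaoka's sharp bound, and the gap matters: to make the UCT five-lemma argument go through for $Res^{S_n}_{S_{2k}}$ on $H^k$, you need $H_k(S_{2k};\bbZ)\to H_k(S_{2k+1};\bbZ)$ to be an isomorphism, and that is exactly the first map the spectral sequence fails to control. Your argument as written only yields injectivity of the restriction, not surjectivity.

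The missing ingredient is Nakaoka's separate observation that the stabilization $H_*(S_m;\bbZ)\to H_*(S_{m+1};\bbZ)$ is \emph{split injective in all degrees}, with no range restriction (a consequence of the free graded-commutative Hopf-algebra structure on $\bigoplus_n H_*(S_n;\bbF_p)$, or equivalently of the transfer/Barratt--Priddy--Quillen splitting). Combining ``always injective'' with ``surjective for source rank $\ge 2q$'' gives the sharp Nakaoka range $\ge 2q$, which is what the theorem asserts. You should either invoke this classical splitting explicitly, or replace the generic homological-stability spectral sequence by one of the more refined treatments (Maazen's thesis, or Kerz's note ``The complex of words and Nakaoka stability'') that are engineered to recover the sharp bound directly. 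Once that is in place, the UCT reduction and the rest of the argument are fine.
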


Specializing to $k=4$, we we immediately obtain the following.

\begin{cor}\label{StableCor} The restriction map $Res^{S_{n}}_{S_{8}}: H^{4}(S_{n},\C^{\times})\rightarrow H^{4}(S_{8}, \C^\times)$ is an isomorphism for all $n\ge 8$
\end{cor}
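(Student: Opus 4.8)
The statement to prove is Corollary~\ref{StableCor}: the restriction map $Res^{S_n}_{S_8}\colon H^4(S_n,\C^\times)\to H^4(S_8,\C^\times)$ is an isomorphism for all $n\ge 8$. The plan is to deduce this directly from Theorem~\ref{Stabilization} (Nakaoka Stability). Specialize that theorem to $k=4$, so that $2k=8$ and the hypothesis $n\ge 2k$ becomes $n\ge 8$. With coefficient module $M=\C^\times$ equipped with the trivial $S_n$-action (which restricts to the trivial $S_8$-action, so the coefficients are consistent under restriction), Theorem~\ref{Stabilization} asserts precisely that $Res^{S_n}_{S_8}\colon H^4(S_n,\C^\times)\to H^4(S_8,\C^\times)$ is an isomorphism for every $n\ge 8$.

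The only point that needs a word of care is that Nakaoka's stability is usually stated with a cohomological degree that varies together with the subgroup index, and one should make sure the indexing matches: the degree is $k$, the stable range subgroup is $S_{2k}$, and the bound is $n\ge 2k$. Plugging $k=4$ gives degree $4$, subgroup $S_8$, and bound $n\ge 8$, with no off-by-one issue. Since $\C^\times$ is an abelian group, it qualifies as an allowable coefficient module in Theorem~\ref{Stabilization}, so no further hypothesis check is required.

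I do not anticipate any genuine obstacle here: the corollary is a one-line specialization of a cited theorem. The only thing resembling ``work'' is confirming the numerology ($2k=8$ when $k=4$) and noting that the trivial action is preserved under restriction so that the restriction map on cohomology is defined in the first place. The proof is therefore:

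\begin{proof}
Apply Theorem~\ref{Stabilization} with $k=4$ and $M=\C^\times$ (with trivial $G$-action). Then $2k=8$, and the hypothesis $n\ge 2k$ reads $n\ge 8$, so $Res^{S_n}_{S_8}\colon H^4(S_n,\C^\times)\to H^4(S_8,\C^\times)$ is an isomorphism for all $n\ge 8$.
\end{proof}
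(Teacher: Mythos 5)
Your proof is correct and matches the paper's exactly: both simply specialize Theorem~\ref{Stabilization} to $k=4$ with $M=\C^\times$, yielding $2k=8$ and the bound $n\ge 8$. The paper states this in a single sentence; your elaboration on the numerology and the trivial coefficient action is accurate but adds nothing beyond the paper's one-line observation.
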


We find that $H^4(S_n,\C^{\times})$ equals $0$, $0$, $\bbZ_2$, $\bbZ_2$, $\bbZ_2\times \bbZ_2$, $\bbZ_2\times \bbZ_2$, and $\bbZ_2\times\bbZ_2\times \bbZ_2$ respectively, 
for $n=2,3,4,5,6,7,$ and $n \ge 8$ respectively.

Below are two lemmas which are central to our arguments in the next section. We will give two proofs for each. The first proof requires little additional knowledge about group cohomology, but relies on computations performed in the computer algebra program GAP, code for which is contained in the Appendix. In Subsection \ref{compfree}, we give alternate proofs of these lemmas which do not require a computer. These arguments use facts about the cohomology ring of the symmetric groups with coefficients in $\bbF_{2}$ and the Steenrod square map.

\begin{lem}\label{S4} Consider the subgroup $\langle (12), (34)\rangle\cong S_{2}\times S_{2}\subseteq S_{4}$. Then the restriction map $Res^{S_{4}}_{S_{2}\times S_{2}}:H^{4}(S_{4}, \C^{\times})\rightarrow H^{4}(S_2\times S_2, \C^{\times})$ is injective.
\end{lem}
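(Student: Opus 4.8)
The plan is to reduce the statement to an explicit, finite cohomology computation and then carry it out (in the paper, via GAP; here I sketch the structural steps). First I would use the canonical isomorphism $\delta\colon H^4(G,\C^\times)\xrightarrow{\ \sim\ }H^5(G,\Z)$ together with the commuting square relating $\delta$ to restriction (both displayed in the excerpt). This lets me replace the claim ``$Res^{S_4}_{S_2\times S_2}\colon H^4(S_4,\C^\times)\to H^4(S_2\times S_2,\C^\times)$ is injective'' with the equivalent claim ``$Res^{S_4}_{S_2\times S_2}\colon H^5(S_4,\Z)\to H^5(S_2\times S_2,\Z)$ is injective.'' Since $H^4(S_4,\C^\times)\cong\Z_2$ by the table in the excerpt, the target group has a single nontrivial element, so injectivity is equivalent to showing that this one nonzero class has nonzero restriction.

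Next I would make the groups concrete enough to compute: $S_4$ has a well-understood integral (or mod $2$) cohomology, and $S_2\times S_2\cong\Z_2\times\Z_2$ has cohomology given by the Künneth formula from $H^*(\Z_2)$. The cleanest route is to work mod $2$ (all the groups in sight are $2$-groups of exponent $2$ or the symmetric group $S_4$ whose relevant cohomology is $2$-torsion): show the generator of $H^5(S_4,\Z)\cong\Z_2$ maps to something nonzero, equivalently that the corresponding mod-$2$ class restricts nontrivially to $H^*(\Z_2\times\Z_2;\F_2)=\F_2[x,y]$. Here one can use the standard description of $H^*(S_4;\F_2)$ and the fact that the restriction to a maximal elementary abelian $2$-subgroup (which $\langle(12),(34)\rangle$ is, being a Klein four-group generated by commuting transpositions) is injective on the relevant part — indeed a Klein four-group of this ``non-transitive'' type detects cohomology of $S_4$ because it is closely tied to the Sylow $2$-subgroup $D_8$ and to the diagonal/regular four-subgroups. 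Concretely, I would locate the explicit polynomial generators of $H^*(S_4;\F_2)$, restrict them along $\langle(12),(34)\rangle\hookrightarrow S_4$, and check the image of the degree-shifted integral generator is a nonzero polynomial in $\F_2[x,y]$.

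The main obstacle is precisely pinning down \emph{which} Klein four-subgroup detects the class and computing the restriction map on it correctly: $S_4$ contains (up to conjugacy) two kinds of $\Z_2\times\Z_2$ — the normal ``double transposition'' copy $\{e,(12)(34),(13)(24),(14)(23)\}$ and the non-normal ``split'' copy $\langle(12),(34)\rangle$ — and a priori the nonzero class of $H^5(S_4,\Z)$ could restrict trivially to the wrong one. So the heart of the argument is the verification that $\langle(12),(34)\rangle$ is the right choice, i.e. that $Res^{S_4}_{\langle(12),(34)\rangle}$ is nonzero in degree $5$ integrally. This is exactly what a GAP computation (using the \texttt{HAP} package to compute integral group cohomology and restriction maps) settles directly, and that is the route the paper takes in the Appendix; the computation-free alternative, promised for Subsection \ref{compfree}, instead invokes the structure of $H^*(S_n;\F_2)$ as a polynomial-type algebra and the compatibility of restriction with Steenrod squares to identify the detecting subgroup abstractly.
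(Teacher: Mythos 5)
Your proposal matches both of the paper's proofs in structure: Proof 1 is exactly the GAP/HAP computation on $H^5(\cdot,\Z)$ that you describe, and Proof 2 is exactly the mod-$2$ cohomology-ring argument with Steenrod squares that you sketch at the end. The one point you flag as an open obstacle --- whether the split Klein four $\langle(12),(34)\rangle$ (rather than the normal double-transposition copy) actually detects the class --- is settled by the paper's Proof 2: with $H^*(S_4,\F_2)=\F_2[\sigma_1,\sigma_2,c_3]/(\sigma_1 c_3)$, the nonzero element of $H^5(S_4,\Z)$ corresponds under $\pi$ to $Sq^1(\sigma_1^2\sigma_2)=\sigma_1^3\sigma_2$, and the restriction $\sigma_1\mapsto\sigma_1'$, $\sigma_2\mapsto\sigma_2'$, $c_3\mapsto 0$ to $\F_2[\sigma_1',\sigma_2']\subset H^*(S_2\times S_2,\F_2)$ sends it to the nonzero monomial $\sigma_1'^{\,3}\sigma_2'$, which completes the step you left as a sketch.
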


\begin{proof}[Proof 1.] The GAP code appearing in the Appendix \ref{Appendix} shows that the image of the restriction map in is $\Z_{2}$, and since $H^{4}(S_{4}, \C^{\times})\cong \Z_{2}$, it must be injective.

\end{proof}

\begin{lem}\label{S8} Consider the subgroup of $S_{8}$ preserving $\{1,2,3,4\}$, which is isomorphic to $S_{4}\times S_{4}$. Then the restriction map $Res^{S_{8}}_{S_{4}\times S_{4}}:H^{4}(S_{8}, \C^{\times})\rightarrow H^{4}(S_{4}\times S_{4}, \C^{\times})$ is injective.
\end{lem}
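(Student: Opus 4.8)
The plan is to reduce this to a computation and/or to the Nakaoka-stability picture in the same spirit as Lemma \ref{S4}. We have $H^4(S_8,\C^\times)\cong \Z_2\times\Z_2\times\Z_2$ by the table above, so injectivity of $Res^{S_8}_{S_4\times S_4}$ amounts to showing the kernel is trivial, equivalently that three explicit generators of $H^4(S_8,\C^\times)$ restrict to linearly independent classes in $H^4(S_4\times S_4,\C^\times)$. The cleanest route is Proof 1-style: translate everything via the canonical isomorphism $\delta\colon H^4(-,\C^\times)\xrightarrow{\sim} H^5(-,\Z)$, which commutes with restriction, and then run the GAP computation from the Appendix on the pair $(S_8,\,S_4\times S_4)$ to verify that the restriction map on $H^5(\,\cdot\,,\Z)$ is injective. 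Since $H^4(S_4\times S_4,\C^\times)$ is computable and the induced map can be written down on generators, the machine check settles it.

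For the computation-free Proof 2, I would work in the mod-$2$ cohomology ring $H^*(S_n;\F_2)$, which is classical: by Nakaoka, $H^*(S_\infty;\F_2)$ is a polynomial algebra on the Dyer--Lashof-type generators, and for $n$ in the stable range the low-degree part is understood. First I would observe that all classes in play are $2$-torsion, so it suffices to prove injectivity of $Res^{S_8}_{S_4\times S_4}$ on $H^4(\,\cdot\,;\F_2)$ (using the Bockstein/coefficient sequences to pass back and forth, exactly as the $\Z$ versus $\C^\times$ discussion in the excerpt indicates; note $H^4(S_8,\C^\times)\cong H^4(S_8;\F_2)$ as both are elementary abelian $2$-groups, and one must be slightly careful since $H^4(-,\C^\times)\cong H^5(-,\Z)$ sits inside $H^5(-;\F_2)$ via the Bockstein, but the relevant classes are detected there). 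Then I would use the known ring structure: a basis of $H^{\le 4}(S_8;\F_2)$ in the relevant degrees is given by products of the first Stiefel--Whitney-type classes coming from the permutation representation together with the stable generators, and the restriction to the Young subgroup $S_4\times S_4$ is computed by the Künneth formula together with the known restrictions $H^*(S_8;\F_2)\to H^*(S_4;\F_2)$ in each factor. The point is that the three-dimensional space $H^4(S_8;\F_2)$ (or the subspace detecting $H^4(S_8,\C^\times)$) has each nonzero class hit nontrivially in $H^*(S_4;\F_2)\otimes H^*(S_4;\F_2)$ — one can see this because the classes are built from the permutation module, whose restriction to $S_4\times S_4$ is the direct sum of the two permutation modules, and products of the associated characteristic classes survive by Künneth.

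Concretely, the steps in order are: (1) reduce to $H^4$ with $\F_2$ (or $\Z$) coefficients via the coefficient long exact sequence, as in \eqref{Long2}; (2) recall/cite the ring structure of $H^*(S_n;\F_2)$ for $n=4,8$ in degrees $\le 4$, identifying explicit generators (the first few classes $c_i$ arising from the sign/permutation representations and the stable classes); (3) compute $Res^{S_8}_{S_4\times S_4}$ on these generators using the behavior of the permutation representation under restriction together with Künneth; (4) check the resulting $3\times(\dim H^4(S_4\times S_4;\F_2))$ matrix has trivial kernel. For Proof 1, steps (2)--(4) are replaced by the single GAP verification after applying $\delta$.

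The main obstacle is step (3) in the computation-free version: one needs the explicit restriction formulas for the mod-$2$ cohomology generators of $S_8$ to the Young subgroup $S_4\times S_4$, and keeping track of which products of generators form a basis of $H^4$ and which survive is the delicate bookkeeping — it is not automatic that \emph{every} nonzero class survives, so one genuinely has to identify the three (or however many) relevant classes and check each. In the GAP version the only ``obstacle'' is organizing the data so the $\delta$-twist and the identification of $H^4(S_4\times S_4,\C^\times)$ are handled correctly, which is why I would present Proof 1 first as the short argument and relegate the Steenrod-square/cohomology-ring argument to Subsection \ref{compfree}.

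\begin{proof}[Proof 1.]
Via the canonical isomorphism $\delta$ and the commuting square above, it suffices to show $Res^{S_8}_{S_4\times S_4}\colon H^5(S_8,\Z)\to H^5(S_4\times S_4,\Z)$ is injective. The GAP code in Appendix \ref{Appendix} computes this restriction map and verifies that its kernel is trivial; since $H^4(S_8,\C^\times)\cong\Z_2\times\Z_2\times\Z_2$, injectivity follows.
\end{proof}
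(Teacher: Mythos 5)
Your Proof 1 as written cites a GAP computation that does not exist in the Appendix, and it elides the step that actually makes the computation tractable. The Appendix computes restriction maps for the pairs $(S_4,\,S_2\times S_2)$ and $(K,\,L)$, where $K=\langle(12),(34),(13)(24),(56),(78),(57)(68),(15)(26)(37)(48),(35)(46)\rangle$ and $L=(S_2\wr S_2)\times(S_2\wr S_2)$; it does not run \texttt{ResolutionFiniteGroup} on $S_8$ (order $40320$) and $S_4\times S_4$, which is substantially more expensive. The paper's Proof~1 bypasses that: since $H^4(S_8,\C^\times)\cong\Z_2^3$ is $2$-torsion, the transfer argument ($\operatorname{cor}\circ\operatorname{res}=[S_8:G]\cdot\id$, with $[S_8:G]$ a unit on $2$-torsion) shows that restriction to any odd-index subgroup $G\le S_8$ is already injective on $H^4(\,\cdot\,,\C^\times)$. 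One takes $G=K$ above, which contains a Sylow $2$-subgroup, so $Res^{S_8}_{G}$ is injective; GAP then verifies that $Res^{G}_{L}$ is an isomorphism on $H^4(\,\cdot\,,\C^\times)\cong H^5(\,\cdot\,,\Z)$; since $L\le S_4\times S_4$, factoring $Res^{S_8}_{L}$ through $S_4\times S_4$ forces $Res^{S_8}_{S_4\times S_4}$ to be injective. Your $\delta$-translation to $H^5(\,\cdot\,,\Z)$ is correct and matches the paper, but the reduction to odd-index subgroups is the load-bearing idea, and without it the claim that ``the GAP code in the Appendix computes this restriction map'' is simply false. Your Proof~2 sketch is in the same spirit as the paper's second proof (mod-$2$ cohomology ring of $S_8$, Steenrod square to identify $H^5(\,\cdot\,,\Z)$ inside $H^5(\,\cdot\,,\F_2)$, restriction of explicit generators to the Young subgroup, K\"unneth), and you correctly flag the generator bookkeeping as the real work, but as presented it remains an outline rather than an argument.
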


\begin{proof}[Proof 1.] Since $H^{4}(S_{8}, \C^{\times})\cong \Z_{2}\times \Z_{2}\times \Z_{2}$ is 2-torsion, for any subgroup $G\le S_{8}$ with $[S_{8}:G]$ odd, we have $Res^{S_{8}}_{G}: H^{4}(S_{8}, \C^{\times})\rightarrow H^{4}(G, \C^{\times})$ is injective (this follows for example from \cite{MR672956}, Proposition 9.5, (ii)). Let $G=\langle(12),(34),(13)(24),(56),(78),(57)(68),(15)(26)(37)(48), (35)(46)\rangle$. This contains the Sylow 2-subgroup (generated by the first 7 elements in the list) so its index is odd, hence the restriction map is injective on $H^{4}(\cdot , \C^{\times})$.

Now, define the group $H =\langle(12),(34),(13)(24),(56),(78),(57)(68)\rangle \le S_{4}\times S_{4}$, which is isomorphic to $(S_{2}\wr S_{2})\times(S_{2}\wr S_{2})$. Computations in GAP (see Appendix \ref{Appendix}) show that $H^{4}(G,\C^{\times})\cong H^{5}(G, \Z)\cong \Z^{9}_{2}$, and $\operatorname{Image}(Res^{G}_{H}(H^{4}(G,\C^{\times})))\cong \Z^{9}_{2}$. Thus $Res^{G}_{H}$ is an isomorphism on $H^{4}(\ \cdot\ , \C^{\times})$. Therefore $Res^{S_{8}}_{H}=Res^{G}_{H}\circ Res^{S_{8}}_{G}$ is injective. But $Res^{S_{8}}_{H}=Res^{S_{4}\times S_{4}}_{H}\circ Res^{S_{8}}_{S_{4}\times S_{4}}$ being injective on $H^{4}(\ \cdot\ , \C^{\times})$ implies $Res^{S_{8}}_{S_{4}\times S_{4}}$ is injective on $H^{4}(\ \cdot\ , \C^{\times})$, as desired.

\end{proof}

\subsection{A computer-free approach.}\label{compfree}

The goal of this section is to provide proofs of Lemmas \ref{S4} and \ref{S8} that do not require computer computations. Similar arguments with more details included are given in Section 4.8 of \cite{EGrecon}.  Let $\bbF_{2}$ denote the field with two elements. The cohomology groups $H^k(S_n,\bbF_2)$ are much better studied than those of $H^k(S_n,\C^{\times})$, due the additional structure of a graded ring using the cup product. 

The short exact sequence
$$0\rightarrow \bbZ\buildrel \times 2\over\rightarrow\bbZ\buildrel \pi\over\rightarrow\bbF_2\rightarrow 0$$
 gives us the long exact sequence in cohomology:

\begin{equation}\label{Long2}
\cdots\rightarrow H^k(G,\bbZ)\buildrel\times 2\over\rightarrow H^k(G,\bbZ)\buildrel \pi\over\rightarrow
H^k(G,\bbF_2) \buildrel\beta\over\rightarrow H^{k+1}(G,\bbZ)\rightarrow\cdots
\end{equation}
where $\pi$ is reduction modulo 2. Equation \eqref{Long2} says that the order 2 classes in $H^{k+1}(G,\bbZ)$ can be identified with the image of $\beta$.

We will need to know the connecting homomorphisms $\beta$ more explicitly. This can be done through the $0\rightarrow\bbZ_2\rightarrow\bbZ_4\buildrel\times 2\over \rightarrow\bbZ_2\rightarrow 0$ sequence.
The corresponding connecting homomorphism $H^k(G,\bbF_2)\rightarrow H^{k+1}(G,\bbF_2)$
is also called the \textit{Steenrod square} $Sq^1$. It is a derivation, i.e.
$Sq^1(xy) =Sq^1(x)y+xSq^1(y)$ (where the product is the cup product), and $Sq^1(a)=a^2$ when $a$ has degree 1. $Sq^1$ is known explicitly in many concrete examples. What matters for us is that $Sq^1=\pi\circ\beta$.

We learned earlier that $H^5(S_n,\bbZ)\cong H^{4}(S_{n}, \C^{\times})$ has exponent 2 for all $n$, so the map $ H^{5}(S_{n}, \Z)\buildrel\times 2\over\rightarrow H^{5}(S_{n}, \Z)$ is the $0$ map. Equation \eqref{Long2} then says that $\pi: H^{5}(S_{n},\Z)\rightarrow H^{5}(S_{n}, \bbF_{2})$ is injective and $\beta: H^{4}(S_{n},\bbF_{2})\rightarrow H^{5}(S_{n}, \Z)$ is surjective. Thus the image $Sq^1(H^4(S_n,\bbF_2))=\pi \circ \beta$ can be lifted to an isomorphism

\begin{equation}\label{SqImage}
\pi:H^5(S_n,\bbZ)\buildrel\sim\over \rightarrow Sq^1(H^4(S_n,\bbF_2))
\end{equation}

%Note that since the Sylow 2-subgroup of $S_4$ is the wreath square $\bbZ_2\wr S_2\cong D_4$, transfer tells us that the restriction of $ H^4(S_4,\C^{\times})=\Z_2$ to $H^4(D_4,\C^{\times})=\Z_2\times \Z_2$ is an injection, since the index of a Sylow 2-subgroup is necessarily odd. 

Lemmas \ref{S4} and \ref{S8} can be proved by hand using facts about $H^k(S_n,\bbF_2)$ contained in \cite{MR1317096}, especially Chapter VI, together with supplementary information on $H^k(S_8,\bbF_2)$ in \cite{MR1060683}.

\medskip

\begin{proof}[Proof 2 of Lemma \ref{S4}.] The cohomology ring $H^*(S_4,\bbF_2)$ is $\bbF_2[\sigma_1,\sigma_2,c_3]/(\sigma_1c_3)$, 
where the subscript as always indicates degree. Hence $H^4(S_4,\bbF_2)\cong \bbZ_2^3$ is spanned by $\sigma_1^4, \sigma_1^2\sigma_2, \sigma_2^2$ . The action of the Steenrod square $Sq^1$ on the generators
%$Sq^1$ acting on the generators sends $\sigma_1$ to $\sigma_1^2$, $\sigma_2$ to $\sigma_1\sigma_2+c_3$, and $c_3$ to 0. Using $H^4(S_4,\bbZ)=\bbZ_3\times\bbZ_2\times\bbZ_4$, 
is given in Chapter VI of \cite{MR1317096}: it  sends $\sigma_1$ to $\sigma_1^2$, $\sigma_2$ to $\sigma_1\sigma_2+c_3$, and $c_3$ to 0. Hence we can
identify (through the injection $\pi$ as in Equation \eqref{SqImage}) the nontrivial class in $H^5(S_4,\bbZ)$ with $Sq^1(\sigma_1^2\sigma_2)=\sigma_1^3\sigma_2$. 
Restrictions to $H^*(S_2\times S_2,\bbF_2)$ of the generators are also given there: the image is the
polynomial algebra $\bbF_2[\sigma'_1,\sigma'_2]\subset H^*(S_2\times S_2,\bbF_2)$, and  $\sigma_1\mapsto
\sigma'_1,\sigma_2\mapsto\sigma'_2,c_3\mapsto0$. Hence
%Now, restriction from $\sigma_1,\sigma_2,c_3\in H^*(S_4,\bbF_2)$ to $H^*(S_2\times S_2,\bbF_2)$ is explicitly understood. Let $\xi$ denote the nontrivial character of $\bbZ_2\cong S_2$: i.e. $\xi(0)=1,\xi(1)=-1$. Then $\sigma_1$ restricts to $\sigma_1(a,b)=\xi(a)\xi(b)$ where $(a,b)\in \bbZ_2\times\bbZ_2$, $\sigma_2((a,b),(a',b'))=\xi(a)\xi(b')$, and $c_3$ is killed. The restriction to $H^*( S_2\times S_2,\bbF_2)$ lies in $H^*(\bbZ_2\times\bbZ_2,\bbF_2)^{S_2}=\bbF_2[\sigma_1,\sigma_2]$, a polynomial algebra (see Section III.4 of \cite{MR1317096}), so there are no identities satisfied by the restrictions $\sigma_1,\sigma_2$, so
 $\sigma_1^3\sigma_2$ restricts to $\sigma_1^{\prime\,3}\sigma'_2$. By naturality of the long exact sequence as before, $\pi$ intertwines $Res^{S_{4}}_{S_{2}\times S_{2}}$ for 
$H^{5}(S_{4},\bbF_{2})$ and $H^{5}(S_{4},\Z)$, and we are done.

% \[ \begin{tikzcd}
%H^{5}(S_{4},\Z) \arrow{r}{\pi} \arrow[swap]{d}{Res^{S_{4}}_{S_{2}\times S_{2}}} & H^{5}(S_{4},\bbF_{2}) \arrow{d}{Res^{S_{4}}_{S_{2}\times S_{2}}} \\%
%H^{5}(S_{2}\times S_{2},\Z) \arrow{r}{\pi}& H^{5}(S_{2}\times S_{2},\bbF_{2})
%\end{tikzcd}
%\]

%We have shown that $Res^{S_{4}}_{S_{2}\times S_{2}}\circ \pi$ is injective, which implies the result.

\end{proof}

The proof of the second lemma is similar so we will only sketch it.

\medskip

\begin{proof}[Proof 2 of Lemma \ref{S8}.] The commutative ring $H^*(S_8,\bbF_2)$ is $\bbF_2[\sigma_1,\sigma_2,\sigma_3,c_3,\sigma_4,x_5,d_6,d_7]/R$ where $R$ consists of relations in degree 6 and higher, so $H^4(S_8,\bbF_2)\cong \bbZ_2^6$. % is spanned by $\sigma_1^4,\sigma_1^2\sigma_2,\sigma_1\sigma_3,\sigma_1c_3,\sigma_2^2,\sigma_4$
%(again, $H^5(S_8,\bbF_2)$ is redundant here).
Equation \eqref{SqImage} identifies $H^5(S_8,\bbZ)$ with the $\bbF_2$-span of $ 
\sigma_1^3\sigma_2+\sigma_1^2\sigma_3, \sigma_1^2c_3, x_5+\sigma_1\sigma_4$.
%"mod 2 reduction" from $H^4(S_8,\bbZ)=\bbZ_3\times \bbZ_4\times \bbZ_2^2$ to $H^4(S_8,\bbF_2)=\bbZ_2^6$ has image $\bbZ_2^3$, so the all important Steenrod square $Sq^1:H^4(S_8,\bbF_2)\rightarrow H^5(S_8,\bbF_2)$ better have kernel $\bbZ_2^3$. Indeed, 
%$Sq^1$ takes $\sigma_1, \sigma_2, \sigma_3, c_3, \sigma_4$ to $\sigma_1^2,\sigma_1\sigma_2+\sigma_3+c_3,\sigma_1c_3+\sigma_1\sigma_3,0,x_5+\sigma_1\sigma_4$ respectively. We compute that the image of $Sq^1$ is the $\bbF_2$-span of $ \sigma_1^3\sigma_2+\sigma_1^2\sigma_3, \sigma_1^2c_3, x_5+\sigma_1\sigma_4$, which we then identify with $H^5(S_8,\bbZ)$.
Restriction to $S_4\times S_4$ is given in \cite{MR1060683} (see the proof of Theorem 3.2), and we find that 
%\begin{align}\sigma_1&\,\mapsto \sigma_1\otimes1+1\otimes \sigma_1\nonumber\\
%\sigma_2&\,\mapsto\sigma_2\otimes 1+\sigma_1\otimes\sigma_1+1\otimes\sigma_2\nonumber\\
%\sigma_3&\,\mapsto\sigma_2\otimes\sigma_1+\sigma_1\otimes\sigma_2\nonumber\\ 
%c_3&\,\mapsto c_3\otimes 1+1\otimes c_3\nonumber\\ 
%\sigma_4&\,\mapsto\sigma_2\otimes\sigma_2\nonumber\\ 
%x_5&\,\mapsto c_3\otimes\sigma_2+\sigma_2\otimes c_3\nonumber\end{align}
% where `$\otimes$' denotes the Kunneth cross-product of cocycles.
the restriction is 3-dimensional. %In particular, we see that the restriction of $\sigma_1^3\sigma_2+\sigma_1^2\sigma_3$ contains $\sigma_1^3\sigma_2\otimes 1$ (in fact it is only one which restricts non-trivially to $S_4\times 1$), the restriction of $\sigma_1^2c_3 $ is $c_3\otimes \sigma_1^2+\sigma_1^2\otimes c_3$ (this restricts non-trivially to $S_6\times 1$), and the restriction of $x_5+\sigma_1\sigma_4$ contains for instance $c_3\otimes \sigma_2$.
Intertwining $\pi$ with restrictions as in the last proof gives us the desired result.

\end{proof}

%\begin{proof} Applying $Hom(\cdot, \C^{\times})$, Nakaoka Stability gives us isomorphisms 

%$$Hom(H_{4}(S_{n}, \Z), \C^{\times})\cong Hom(H^{4}(S_{8},\Z) ,\C^{\times}).$$ 
%By the dual universal coefficients theorem, We have the natural short exact sequence 
%$$0\rightarrow Ext^{1}(H_{3}(G,\Z), \C^{\times})\rightarrow H^{4}(S_{n}, \C^{\times})\rightarrow Hom(H_{4}(S_{n},\Z), \C^{\times})\rightarrow 0$$ 
%As $\C^{\times}$ is injective, the second term is $0$, and thus we have natural isomorphisms 
%$$H^{4}(S_{n}, \C^{\times})\cong Hom(H_{4}(S_{n},\Z), \C^{\times})$$ 
%Thus composing these with the dual inclusion isomorphisms, we get natural isomorphisms $H^{4}(S_{n}, \C^{\times})\rightarrow H^{4}(S_{8}, \C^{\times})$, and since the Nakaoka isomorphisms are induced by inclusion, naturality and duality imply these isomoprhisms are induced by restriction.

%\end{proof}

\section{Permutation actions.}\label{PermutationActions}

Let $\cC$ be a non-degenerate braided fusion category, $n\ge 1$, and $S_{n}$ the permutation group on $n$ elements. Let $G\le S_{n}$ be any subgroup. We have a group homomorphism $\rho_{n}: G\rightarrow EqBr(\cC^{\boxtimes n})$ obtained simply by permuting the factors. As discussed at the end of Section \ref{firstext}, one way to show $o_{3}(\rho_{n})$ vanishes and construct a specific lifting $\underline{\rho}_{n}:\underline{G}\rightarrow \underline{Pic}(\cC)\cong \underline{EqBr}(\cC)$ at the same time is to explicitly construct a braided categorical action of $G\le S_{n}$ on $\cC^{\boxtimes n}$.

We proceed to define $$\underline{\rho}_{n}:\underline{G}\rightarrow \underline{Pic}(\cC)\cong \underline{EqBr}(\cC)$$

\noindent Let $\rho_{n}(g)$ be the endofunctor of $\cC^{\boxtimes n}$ that acts on objects via

$$ \rho_{n}(g)(X_{1}\boxtimes \cdots \boxtimes X_{n})=X_{g^{-1}(1)}\boxtimes \cdots \boxtimes X_{g^{-1}(n)}$$

\noindent and similarly on simple tensor morphisms 

$$ \rho_{n}(g)(f_{1}\boxtimes \cdots \boxtimes f_{n})=f_{g^{-1}(1)}\boxtimes \cdots \boxtimes f_{g^{-1}(n)} .$$

\noindent Let $X=X_{1}\boxtimes \cdots \boxtimes X_{n}$ and $Y=Y_{1}\boxtimes \cdots \boxtimes Y_{n}$ be objects in $\cC^{\boxtimes n}$. We define the monoidal functor structure maps $\psi^{g}_{X,Y}:\rho_{n}(g)(X)\otimes \rho_{n}(g)(Y)\rightarrow \rho_{n}(g)(X\otimes Y)$ by 

$$\psi^{g}_{X,Y}:=id_{X_{1}\otimes Y_{1}}\boxtimes \cdots \boxtimes id_{X_{n}\otimes Y_{n}}.$$

\noindent Here we identify $$\rho_{n}(g)(X)\otimes \rho_{n}(g)(Y)=(X_{g^{-1}(1)}\otimes Y_{g^{-1}(1)})\boxtimes \cdots \boxtimes (X_{g^{-1}(n)}\otimes Y_{g^{-1}(n)})$$ with 

$$\rho_{n}(g)(X\otimes Y)=(X_{g^{-1}(1)}\otimes Y_{g^{-1}(1)})\boxtimes \cdots \boxtimes (X_{g^{-1}(n)}\otimes Y_{g^{-1}(n)}).$$

%$$\psi^{g}_{X,Y}: \rho_{n}(g)(X_{1}\boxtimes \cdots \boxtimes X_{n})\otimes \rho_{n}(g) (Y_{1}\boxtimes \cdots \boxtimes Y_{n})$$
%$$=(X_{g^{-1}(1)}\boxtimes \cdots \boxtimes X_{g^{-1}(n)})\otimes (Y_{g^{-1}(1)}\boxtimes \cdots \boxtimes Y_{g^{-1}(n)})$$
%$$=(X_{g^{-1}(1)}\otimes Y_{g^{-1}(1)})\boxtimes \cdots \boxtimes (X_{g^{-1}(n)}\otimes Y_{g^{-1}(n)})$$
%$$\rightarrow \rho_{n}(g)\left( (X_1\otimes Y_{1})\boxtimes \cdots \boxtimes (X_{n}\otimes Y_{n})\right)$$ 
%$$=(X_{g^{-1}(1)}\otimes Y_{g^{-1}(1)})\boxtimes \cdots \boxtimes (X_{g^{-1}(n)}\otimes Y_{g^{-1}(n)})$$

\noindent It is clear from the construction that each $\rho_{n}(g)$ is a braided autoequivalence of $\cC^{\boxtimes n}$. Furthermore, we see that 

$$\rho_{n}(g)\circ \rho_{n}(h)(X)=X_{h^{-1}g^{-1}(1)}\boxtimes \cdots \boxtimes X_{h^{-1}g^{-1}(n)}=\rho_{n}(gh)(X)$$

Since our action is strict, we can define the tensorators 

$$\mu_{g,h}\in Nat(\rho_{n}(g)\circ \rho_{n}(h), \rho_{n}(gh)) = Nat(\rho_{n}(gh), \rho_{n}(gh))$$ 

$$\mu_{g,h}:=id_{\rho_{n}(gh)}$$

The strictness properties of our action make it easy to verify that this data assembles into a (strict) monoidal functor $\underline{\rho}_{n}:\underline{G}\rightarrow \underline{EqBr}(\cC^{\boxtimes n})\cong \underline{Pic}(\cC^{\boxtimes n})$ which we call the \textit{standard permutation categorical action} associated to the group $G\le S_{n}$. We summarize the above discussion in the following proposition.

\begin{prop}\label{o3vanish} Let $G\le S_{n}$, and $\rho_{n}:G\rightarrow EqBr(\cC^{\boxtimes n})$ be the canonical permutation homomorphism. Then the obstruction $o_{3}(\rho_{n})\in H^{3}(G, \operatorname{Inv}(\cC))$ vanishes. Furthermore, there is a canonical lift $\underline{\rho}_{n}: \underline{G}\rightarrow \underline{EqBr}(\cC^{\boxtimes n})\cong \underline{Pic}(\cC^{\boxtimes n})$ which we call the \textit{standard permutation categorical action}, constructed above.

\end{prop}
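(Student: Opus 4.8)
\textbf{Proof plan for Proposition \ref{o3vanish}.}

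The plan is to reduce the statement to the construction already carried out explicitly in the preceding paragraphs and to verify that the data assembled there genuinely constitutes a strict monoidal functor $\underline{\rho}_n\colon\underline{G}\to\underline{EqBr}(\cC^{\boxtimes n})$. First I would observe that the vanishing of $o_3(\rho_n)$ is a formal consequence of the existence of \emph{any} monoidal lift of $\rho_n$ to $\underline{EqBr}(\cC^{\boxtimes n})\cong\underline{Pic}(\cC^{\boxtimes n})$: this is precisely the content of the discussion at the end of Section \ref{firstext}, where it is noted that directly constructing a categorical action furnishes a witness to the triviality of $o_3$. So the whole proposition follows once the candidate $\underline{\rho}_n$ built above is confirmed to be a well-defined strict monoidal functor whose $1$-truncation is $\rho_n$.

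The verification itself I would organize in three small steps. Step one: check that each $\rho_n(g)$ is a braided monoidal autoequivalence of $\cC^{\boxtimes n}$, with the monoidal structure maps $\psi^g_{X,Y}$ as identities under the stated identifications; here the hexagon axioms for $\rho_n(g)$ reduce to the corresponding hexagon axioms in $\cC^{\boxtimes n}$ after relabelling the tensor factors by $g^{-1}$, since permuting the factors commutes strictly with the tensor product and the braiding of a Deligne product is the factorwise braiding. Step two: check the composition identity $\rho_n(g)\circ\rho_n(h)=\rho_n(gh)$ strictly on objects, morphisms, and monoidal structure maps, which is the computation $X_{h^{-1}g^{-1}(i)}=X_{(gh)^{-1}(i)}$ already displayed, together with the matching composite of the $\psi$'s (all identities). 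Step three: verify that with tensorators $\mu_{g,h}:=\id_{\rho_n(gh)}$ the two coherence conditions for a monoidal functor out of $\underline{G}$ — the associativity/pentagon-type condition relating $\mu_{gh,k}$, $\mu_{g,h}$, $\mu_{g,hk}$, $\mu_{h,k}$, and the unit conditions involving $\rho_n(e)=\id$ — hold; since every tensorator and every structural isomorphism in sight is an identity, both conditions collapse to $\id=\id$. Finally I would note that the $1$-truncation of $\underline{\rho}_n$ recovers the homomorphism $\rho_n$ by construction, so $\underline{\rho}_n$ is a lift, and invoke the results of \cite{MR2677836} (as recalled in Section \ref{firstext}) to conclude $o_3(\rho_n)=0$.

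I do not expect a genuine obstacle in this proposition: the entire argument is an exercise in bookkeeping, made painless by the strictness of the permutation action. The one point deserving a sentence of care is the identification under which $\psi^g_{X,Y}$ becomes the identity — namely that $\rho_n(g)(X)\otimes\rho_n(g)(Y)$ and $\rho_n(g)(X\otimes Y)$ are \emph{equal} (not merely isomorphic) as objects of $\cC^{\boxtimes n}$, because $\otimes$ in $\cC^{\boxtimes n}$ is computed factorwise and $g^{-1}$ permutes the factors uniformly; this is what makes the monoidal structure strict and the whole verification trivial. The substantive work of the paper lies entirely in the $o_4$ obstruction treated in later sections, not here.
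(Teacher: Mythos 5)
Your proposal is correct and matches the paper's approach exactly: the paper treats the preceding construction of the strict permutation action as the proof, and the proposition is stated as a summary, with the vanishing of $o_3$ following from the general principle (end of Section \ref{firstext}) that exhibiting a monoidal lift is a witness to triviality of the obstruction. Your extra bookkeeping steps (hexagon, composition, coherence collapsing to $\id=\id$) are exactly what the paper leaves implicit when it says ``the strictness properties of our action make it easy to verify.''
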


Note that we use $\underline{\rho}_{n}:\underline{S_{n}}\rightarrow \underline{EqBr}(\cC^{\boxtimes n})\cong \underline{Pic}(\cC^{\boxtimes n})$ to simultaneously refer to the monoidal functor whose images are in $\underline{EqBr}(\cC^{\boxtimes n})$ with $\underline{Pic}(\cC^{\boxtimes n})$, where we use the canonical monoidal equivalence $F$ from \eqref{F} identifying these two.

\medskip

Before we prove Theorem \ref{MainThm}, we wish to discuss other categorical actions associated to permuatation actions. As described in Section \ref{firstext}, given a categorical action $\underline{\rho}: \underline{G}\rightarrow \underline{EqBr}(\cC)\cong \underline{Pic}(\cC)$, all other categorical actions which truncate to the same homomorphism of groups $\rho:G\rightarrow EqBr(\cC)\cong Pic(\cC)$ can be obtained by twisting the original $\underline{\rho}$ by a 2-cocycle $\omega\in Z^{2}(G, \operatorname{Inv}(\cC))$. Here the action of $G$ on $\operatorname{Inv}(\cC)$ is induced by the homomorphism $\rho$. Equivalence classes of such twists only depend on the cohomology class in $H^{2}(G,\operatorname{Inv}(\cC))$.

In particular, for $G\le S_{n}$ and $\underline{\rho}_{n}:\underline{G}\rightarrow \underline{EqBr}(\cC^{\boxtimes n})$, there are other possible ``non-standard" permutation categorical actions, parametrized by the group $H^{2}(G,\operatorname{Inv}(\cC^{\boxtimes n}))$. We call these \textit{twisted} permutation categorical actions. We would like to determine necessary and sufficient conditions for there to be no twistings, at least when $G=S_{n}$. We know from \cite{FusionPermutation} that there is a unique permutation categorical action of $S_{2}$ on $\cC\boxtimes \cC$ for \textit{any} non-degenerate $\cC$. We have the following proposition, which gives us a condition in general.

\begin{prop} $H^{2}(S_{3}, \operatorname{Inv}(\cC^{\boxtimes 3}) )\cong \operatorname{Inv}(\cC)/2\operatorname{Inv}(\cC)$ and for all $n\ge 4$, $H^{2}(S_{n}, \operatorname{Inv}(\cC^{\boxtimes n}) )\cong (\operatorname{Inv}(\cC)/2\operatorname{Inv}(\cC))\times \operatorname{Inv}(\cC)_{2}$, where $\operatorname{Inv}(\cC)_{2}$ denotes the elements of order 2.
\end{prop}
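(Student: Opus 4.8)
The plan is to recognise $\operatorname{Inv}(\cC^{\boxtimes n})$, with its standard permutation $S_n$-action, as an induced module, use Shapiro's lemma to replace $S_n$ by a smaller symmetric group acting \emph{trivially}, and then finish via the universal coefficient theorem fed by the classical values of $H_1$ and $H_2$ of symmetric groups.

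Write $A:=\operatorname{Inv}(\cC)$. The invertible objects of a Deligne product $\cC^{\boxtimes n}$ are exactly the external products of invertible objects, so $\operatorname{Inv}(\cC^{\boxtimes n})\cong A^{n}$, and $\rho_n$ restricted to invertibles is the permutation action on the $n$ factors. Letting $S_{n-1}\le S_n$ be the stabiliser of one coordinate, I would identify this $\Z[S_n]$-module with $\operatorname{Ind}_{S_{n-1}}^{S_n}(A)$, where $A$ carries the trivial $S_{n-1}$-action: $\Z[S_n/S_{n-1}]$ is the standard $n$-dimensional permutation module $\Z^{n}$, and the projection formula gives $\Z^{n}\otimes_{\Z}A\cong\operatorname{Ind}_{S_{n-1}}^{S_n}\big(\operatorname{Res}^{S_n}_{S_{n-1}}A\big)=\operatorname{Ind}_{S_{n-1}}^{S_n}(A)$ with $A$ trivial. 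Shapiro's lemma then gives a natural isomorphism $H^{2}\big(S_n,\operatorname{Inv}(\cC^{\boxtimes n})\big)\cong H^{2}(S_{n-1},A)$, with trivial coefficients on the right.

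For trivial coefficients the universal coefficient theorem supplies a short exact sequence, split naturally in $A$,
\[
0\to\operatorname{Ext}^{1}_{\Z}\!\big(H_{1}(S_{n-1};\Z),A\big)\to H^{2}(S_{n-1};A)\to\operatorname{Hom}_{\Z}\!\big(H_{2}(S_{n-1};\Z),A\big)\to 0 .
\]
Since $H_{1}(S_m;\Z)=S_m^{\mathrm{ab}}\cong\Z_2$ for all $m\ge 2$, the left term is $\operatorname{Ext}^{1}_{\Z}(\Z_2,A)\cong A/2A$. The Schur multiplier $H_{2}(S_m;\Z)$ is trivial for $m\le 3$ and is $\Z_2$ for $m\ge 4$ (the classical facts already used in Section 3), so the right term is $0$ for $n-1\le 3$ and is $\operatorname{Hom}_{\Z}(\Z_2,A)\cong A_2$ for $n-1\ge 4$. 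Reading off the two cases from the split sequence gives the stated isomorphisms.

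None of these steps is deep; the only external inputs are the abelianisation and Schur multiplier of symmetric groups, both classical. The one point that needs care is the module identification in the second paragraph — checking that the permutation module $A^{n}$ is induced from the \emph{trivial} module on $S_{n-1}$ rather than some twist — which is routine bookkeeping with coset representatives. As a sanity check, for $n=2$ the recipe yields $H^{2}(S_1,A)=0$, consistent with the known uniqueness of the permutation categorical action of $S_2$ on $\cC\boxtimes\cC$.
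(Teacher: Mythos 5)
Your proposal is correct and follows essentially the same route as the paper: identify $\operatorname{Inv}(\cC^{\boxtimes n})$ as (co)induced from the trivial $S_{n-1}$-module $\operatorname{Inv}(\cC)$, apply Shapiro's lemma to reduce to $H^2(S_{n-1},\operatorname{Inv}(\cC))$ with trivial coefficients, and then read off the answer from the split universal coefficient sequence together with the classical values $H_1(S_m;\Z)\cong\Z_2$ and $H_2(S_m;\Z)$ (trivial for $m\le 3$, $\Z_2$ for $m\ge 4$). The only cosmetic difference is that you phrase the module identification via induction and the projection formula while the paper uses coinduction, but these coincide for the finite-index subgroup $S_{n-1}\le S_n$, so the arguments are the same.
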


\begin{proof} Consider the $S_{n-1}$ module $M:=\operatorname{Inv}(\cC)$, where $S_{n-1}$ acts trivially. Then clearly $Coind^{S_{n}}_{S_{n-1}}(M)\cong \operatorname{Inv}(\cC)^{n}$ with the ordinary permutation action. Thus by Shapiro's lemma, $H^{2}(S_{n-1}, M)\cong H^{2}(S_{n},\operatorname{Inv}(\cC)^{n})=H^{2}(S_{n}, \operatorname{Inv}(\cC^{\boxtimes n}))$, and thus we've reduced the problem to computing the second cohomology of $H^{2}(S_{n-1}, M)$ with trivial action on the coefficient group $M$. By the dual universal coefficient theorem, we have a short exact sequence 

$$0\rightarrow \operatorname{Ext}^{1}(H_{1}(S_{n-1}, \Z), M)\rightarrow H^{2}(S_{n-1}, M)\rightarrow \operatorname{Hom}(H_{2}(S_{n-1}, \Z), M)\rightarrow 0$$ 

\noindent which splits (non-naturally). Here the homology groups with coefficients in the integers have trivial action. But the first homology groups for $S_{n}$ are isomorphic to $\Z_{2}$ for all $n\ge 2$, while $H_{2}(S_{2}, \Z)$ and $H_{2}(S_{3}, \Z)$ are trivial and $H_{2}(S_{n}, \Z)\cong \Z_{2}$ for all $n\ge 4$.

First we consider the case $n=3$. We see that $\operatorname{Ext}^{1}(H_{1}(S_{2}, \Z), M)\cong H^{2}(S_{2}, M)\cong H^{2}(S_{3},\operatorname{Inv}(\cC^{\boxtimes n}))$. But $H_{1}(S_{2}, \Z)\cong \Z_{2}$, and $\operatorname{Ext}^{1}(\Z_{2}, M)\cong M/2M$.

Now suppose $n\ge 4$. Then $H_{1}(S_{n}, \Z)\cong H_{2}(S_{n}, \Z)\cong \Z_{2}$. The first non-zero term in the short exact sequence gives $\operatorname{Inv}(\cC)/2\operatorname{Inv}(\cC)$, while the second gives $\operatorname{Inv}(\cC)_{2}$.

\end{proof}

\begin{cor} For $n\ge 3$, the standard permutation categorical action $\underline{\rho}: \underline{S_{n}}\rightarrow \underline{Pic}(\cC^{\boxtimes{n}})$ is the unique permutation categorical action if and only if $|\operatorname{Inv}(\cC)|$ is odd.
\end{cor}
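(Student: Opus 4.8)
The plan is to deduce the corollary directly from the preceding proposition and the standard structure theory for twisted categorical actions explained in Section \ref{PermutationActions}. Recall that the twisted permutation categorical actions of $G\le S_n$ on $\cC^{\boxtimes n}$ which truncate to $\rho_n$ are parametrized, up to equivalence, by $H^2(G,\operatorname{Inv}(\cC^{\boxtimes n}))$, with the standard action $\underline{\rho}_n$ corresponding to the trivial class. Hence $\underline{\rho}_n$ is the \emph{unique} permutation categorical action (up to equivalence) precisely when $H^2(S_n,\operatorname{Inv}(\cC^{\boxtimes n}))=0$. So the corollary is just the statement that, for $n\ge 3$, the group computed in the proposition vanishes if and only if $|\operatorname{Inv}(\cC)|$ is odd.

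First I would recall the notation $A:=\operatorname{Inv}(\cC)$, a finite abelian group, and separate the two cases. For $n=3$, the proposition gives $H^2(S_3,\operatorname{Inv}(\cC^{\boxtimes 3}))\cong A/2A$. Now $A/2A=0$ if and only if multiplication by $2$ is surjective on $A$; for a finite abelian group this happens if and only if $A$ has no element of order $2$, i.e. if and only if $|A|$ is odd (by the structure theorem, $A$ decomposes into cyclic $p$-groups, and the $2$-primary part is nontrivial exactly when $2\mid |A|$). For $n\ge 4$, the proposition gives $H^2(S_n,\operatorname{Inv}(\cC^{\boxtimes n}))\cong (A/2A)\times A_2$, where $A_2$ is the $2$-torsion subgroup. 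This vanishes if and only if both $A/2A=0$ and $A_2=0$; but as just noted each of these conditions is individually equivalent to $|A|$ being odd, so their conjunction is too. Assembling the two cases, $H^2(S_n,\operatorname{Inv}(\cC^{\boxtimes n}))=0$ for some (equivalently, every) $n\ge 3$ exactly when $|\operatorname{Inv}(\cC)|$ is odd, which is the claim.

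There is really no main obstacle here: the content is entirely in the proposition, and what remains is the elementary observation that for a finite abelian group $A$ the conditions ``$A/2A=0$'' and ``$A_2=0$'' are each equivalent to $|A|$ being odd. The only point requiring a word of care is making explicit that the parametrization of permutation categorical actions by $H^2(G,\operatorname{Inv}(\cC^{\boxtimes n}))$ classifies them \emph{up to equivalence}, so that ``unique'' in the statement means ``unique up to equivalence of braided categorical actions truncating to $\rho_n$''; this is exactly the torsor statement recalled just before the proposition, with the standard action furnishing the basepoint, so no new work is needed.
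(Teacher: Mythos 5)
Your proof is correct and takes essentially the same route as the paper: both reduce the statement to the preceding proposition and then observe that, for a finite abelian group $A$, the conditions $A/2A=0$ and $A_2=0$ are each equivalent to $|A|$ being odd. The paper's version is a touch terser (kernel of multiplication by $2$, finiteness giving bijectivity) but the content is identical.
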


\begin{proof}
If $|\operatorname{Inv}(\cC)|$ is even, then multiplication by $2$ has a kernel, and thus $2\operatorname{Inv}(\cC)$ is a proper subgroup, hence $\operatorname{Inv}(\cC)/2\operatorname{Inv}(\cC)\ne 0$. Conversely, if $|\operatorname{Inv}(\cC)|$ is odd, then no element has even order so $\operatorname{Inv}(\cC)_{2}=0$. But then multiplication by $2$ is a bijection so $Inv(\cC)/2 \operatorname{Inv}(\cC)=0$.
\end{proof}

We would also very much like to know for which twistings the $o_{4}$ obstruction vanishes.

\begin{quest}\label{question} Let $G\le S_{n}$, $\cC$ be non-degenerate braided, and $\underline{\rho}_{n}:\underline{G}\rightarrow \underline{EqBr}(\cC^{\boxtimes n})$ be the standard permutation categorical action. For $\omega\in H^{2}(G, \operatorname{Inv}(\cC))$, let $\underline{\omega \triangleright \rho}$ denote the corresponding twisted permutation categorical action. What are necessary and sufficient conditions on $G$, $\omega$ and $\cC$ so that $o_{4}(\underline{\omega \triangleright \rho})$ vanishes?
\end{quest}

There is a concrete formula for the $o_{4}$ obstruction of the twisted permutation actions in terms of $\underline{\rho}$ and $\omega\in H^{2}(G, \operatorname{Inv}(\cC))$ given in Proposition 9 of \cite{MR3555361} that will likely be of use in answering this question. 

\medskip

We now turn to a proof of Theorem \ref{MainThm}.

\begin{proof}[Proof of Theorem \ref{MainThm}]

Let $\cC$ be a non-degenerate braided fusion category. Let $\underline{\rho}_{n}:\underline{S}_{n}\rightarrow \underline{EqBr}(\cC^{\boxtimes n})\cong \underline{Pic}(\cC^{\boxtimes n})$ be the standard permutation categorical action. We claim it suffices to show $o_{4}(\underline{\rho}_{n})$ vanishes for all $n\ge 8$. Indeed any $S_{n}$ can be embedded in $S_{m}$ for some $m\ge 8$, simply by considering its action on the set $\{1,2,\dots , n\}\subseteq \{1,\dots, m\}$ and we can view the restriction of $\underline{\rho}_{m}$ to $\underline{S}_{n}$ as the product action of $\underline{\rho}_{n}\times \underline{1}_{\cC^{\boxtimes (m-n)}}$. Hence by Lemma \ref{trivialproduct}, if $o_{4}(\underline{\rho}_{m})$ vanishes for all $m\ge 8$, then it vanishes for all n. However, Lemma \ref{trivialproduct} shows $o_{4}(\underline{\rho}_{8})$ vanishes if and only if $Res^{S_{m}}_{S_{8}}o_{4}(\underline{\rho}_{m})=o_{4}(\underline{\rho}_{8}\times \underline{1}_{\cC^{\boxtimes (m-8)}})$ does. However, by Nakaoka Stability (Corollary \ref{StableCor}), the restriction map $Res^{S_{m}}_{S_{8}}: H^{4}(S_{m}, \C^{\times})\rightarrow H^{4}(S_{8}, \C^{\times})$ is an isomorphism for all $m\ge 8$, proving the claim. 

Now we consider $S_{4}$. By Lemma \ref{product}, since the standard permutation categorical action of $S_{2}\times S_{2}\le S_{4}$ is a product action $\underline{\rho}_{2}\times \underline{\rho}_2$ and both these component actions have vanishing obstructions (since $H^{4}(\Z_{2}, \C^{\times})$ is trivial), we have from Equation \eqref{restriction} and Lemma \ref{product} that $Res^{S_{4}}_{S\times S_{2}}o_{4}(\underline{\rho}_{4})=o_{4}(\underline{\rho}_{2}\times \underline{\rho}_{2})$ vanishes. By Lemma \ref{S4}, $Res^{S_{4}}_{S_{2}\times S_{2}}$ is injective on $H^{4}(\cdot, \C^{\times})$, and thus $o_{4}(\underline{\rho}_{4})$ also vanishes. Applying a similar argument, we see that $Res^{S_{8}}_{S_{4}\times S_{4}}o_{4}(\underline{\rho}_{8})=o_{4}(\underline{\rho}_{4}\times \underline{\rho}_{4})$ vanishes. By Lemma \ref{S8}, $Res^{S_{8}}_{S_{4}\times S_{4}}$ is injective on $H^{4}(\cdot, \C^{\times})$. Thus $o_{4}(\underline{\rho}_{8})$ vanishes, concluding the argument.

\end{proof}

\section{Appendix: GAP computations.}\label{Appendix} These GAP computations require the package HAP. The following command computes the restriction map associated to the inclusion $H=\Z_{2}\times \Z_{2}\subseteq S_{4}=G$.

\medskip

G:=Group((1,2),(2,3),(3,4));

H:=Group((1,2),(3,4));

f:=GroupHomomorphismByImages(H,G,[(1,2),(3,4)],[(1,2),(3,4)]);

R:=ResolutionFiniteGroup(H,6);

S:=ResolutionFiniteGroup(G,6);

Amap:=EquivariantChainMap(R,S,f);

Hf:=Cohomology(HomToIntegers(Amap),5);

\medskip

Print(GroupCohomology(G,5));

Print(AbelianInvariants(Image(Hf)));

\medskip

\noindent The last two lines compute $H^{4}(S_{4}, \C^{\times})\cong H^{5}(G, \Z)=\Z_{2}$, and show that the image of the restriction map to $H^{4}(\Z_{2}\times \Z_{2}, \C^{\times})\cong H^{5}(H,\Z)$ is $\Z_{2}$, and in particular is injective. The following commands compute the restriction maps associated to the inclusion $L\cong (\Z_{2}\wr \Z_{2})\times (\Z_{2}\wr \Z_{2})\subseteq \langle\Z_{2}\wr \Z_{2}\wr \Z_{2},(35)(46)\rangle = K $.

\medskip

K:=Group((1,2),(3,4),(5,6), (7,8), (1,3)(2,4), (5,7)(6,8), (1,5)(2,6)(3,7)(4,8), (3,5)(4,6));

L:=Group((1,2),(3,4), (5,6), (7,8), (1,3)(2,4), (5,7)(6,8));

g:=GroupHomomorphismByImages(L,K,[(1,2),(3,4), (5,6), (7,8), (1,3)(2,4), (5,7)(6,8)];

[(1,2),(3,4), (5,6), (7,8), (1,3)(2,4), (5,7)(6,8)]);

T:=ResolutionFiniteGroup(L,6);

U:=ResolutionFiniteGroup(K,6);

Gmap:=EquivariantChainMap(T,U,g);

Hg:=Cohomology(HomToIntegers(Gmap),5);

\medskip

Print(GroupCohomology(K,5));

Print(AbelianInvariants(Image(Hg)));

\medskip

\noindent The last two lines compute $H^{4}(\langle\Z_{2}\wr \Z_{2}\wr \Z_{2},(35)(46)\rangle, \C^{\times})\cong H^{5}(K, \Z)=(\Z_{2})^{\times 9}$, and show that the image of the restriction map to $H^{4}( (\Z_{2}\wr \Z_{2})\times (\Z_{2}\wr \Z_{2}), \C^{\times})\cong H^{5}(L,\Z)$ is $(\Z_{2})^{\times 9}$, and in particular the restriction map is injective on $H^{4}$.

\bibliography{bibliography}
\bibliographystyle{plain}

\end{document}